\newcommand{\supp}{{\rm supp}}
\newcommand{\hg}{\widehat{g}}
\newcommand{\hphi}{\widehat{\phi}}  
\newcommand\be{\begin{equation}}
\newcommand\ee{\end{equation}}
\newcommand\bea{\begin{eqnarray}}
\newcommand\eea{\end{eqnarray}}
\newcommand\bi{\begin{itemize}}
\newcommand\ei{\end{itemize}}
\newcommand\ben{\begin{enumerate}}
\newcommand\een{\end{enumerate}}
\newcommand\bc{\begin{center}}
\newcommand\ec{\end{center}}
\newcommand\ba{\begin{array}}
\newcommand\ea{\end{array}}
\def\notdiv{\ \mathbin{\mkern-8mu|\!\!\!\smallsetminus}}
\newcommand{\R}{\ensuremath{\mathbb{R}}}
\newcommand{\C}{\ensuremath{\mathbb{C}}}
\newcommand{\fof}{\frac{1}{4}}  
\newcommand{\foh}{\frac{1}{2}}  
\newcommand{\jsgeneral}[2]{{\underline{#1}\choose #2}}
\newtheorem{thm}{Theorem}[section]
\newtheorem{lem}[thm]{Lemma}
\theoremstyle{definition}
\newtheorem{rek}[thm]{Remark}
\newcommand{\twocase}[5]{#1 \begin{cases} #2 & \text{{\rm #3}}\\ #4
&\text{{\rm #5}} \end{cases}   }
\newcommand{\gep}{\epsilon}
\newcommand{\gam}[2]{\Gamma\left(\frac{#1}{#2}\right)}
\newcommand{\intii}{\int_{-\infty}^\infty}
\newcommand{\ci}{\frac1{2\pi i}}
\begin{document}

\title[A Symplectic Test of the $L$-Functions Ratios Conjecture]{A
Symplectic Test of the $L$-Functions Ratios Conjecture}

\author{Steven J. Miller}
\email{sjmiller@math.brown.edu} \address{Department of Mathematics,
Brown University, Providence, RI 02912}

\subjclass[2000]{11M26 (primary), 11M41, 15A52 (secondary).}
\keywords{$1$-Level Density, Dirichlet $L$-functions, Low Lying
Zeros, Ratios Conjecture}

\date{\today}

\thanks{The author would like to thank Eduardo Due\~nez, Chris Hughes, Duc Khiem Huynh,
Jon Keating, Nina Snaith and Sergei Treil for many enlightening
conversations, Jeffrey Stopple for finding a typo in the proof of
Lemma \ref{lem:ntsumkeven1}, and the University of Bristol for its
hospitality (where much of this work was done). This work was partly
supported by NSF grant DMS0600848.}

\begin{abstract} Recently Conrey, Farmer and Zirnbauer
\cite{CFZ1,CFZ2} conjectured formulas for the averages over a family
of ratios of products of shifted $L$-functions. Their $L$-functions
Ratios Conjecture predicts both the main and lower order terms for
many problems, ranging from $n$-level correlations and densities to
mollifiers and moments to vanishing at the central point. There are
now many results showing agreement between the main terms of number
theory and random matrix theory; however, there are very few
families where the lower order terms are known. These terms often
depend on subtle arithmetic properties of the family, and provide a
way to break the universality of behavior. The $L$-functions Ratios
Conjecture provides a powerful and tractable way to predict these
terms. We test a specific case here, that of the $1$-level density
for the symplectic family of quadratic Dirichlet characters arising
from even fundamental discriminants $d \le X$. For test functions
supported in $(-1/3, 1/3)$ we calculate all the lower order terms up
to size $O(X^{-1/2+\gep})$ and observe perfect agreement with the
conjecture (for test functions supported in $(-1, 1)$ we show
agreement up to errors of size $O(X^{-\gep})$ for any $\gep$). Thus
for this family and suitably restricted test functions, we
completely verify the Ratios Conjecture's prediction for the
$1$-level density.
\end{abstract}


\maketitle

\setcounter{equation}{0}

\setcounter{equation}{0}

\section{Introduction}

Montgomery's \cite{Mon} analysis of the pair correlation of zeros of
$\zeta(s)$ revealed a striking similarity to the behavior of
eigenvalues of ensembles of random matrices. Since then, this
connection has been a tremendous predictive aid to researchers in
number theory in modeling the behavior of zeros and values of
$L$-functions, ranging from spacings between adjacent zeros
\cite{Mon,Hej,Od1,Od2,RS} to moments of $L$-functions
\cite{CF,CFKRS}. Katz and Sarnak \cite{KaSa1,KaSa2} conjectured
that, in the limit as the conductors tend to infinity, the behavior
of the normalized zeros near the central point agree with the
$N\to\infty$ scaling limit of the normalized eigenvalues near $1$ of
a subgroup of $U(N)$. One way to test this correspondence is through
the $n$-level density of a family $\mathcal{F}$ of $L$-functions
$L(s,f)$; we concentrate on this statistic in this paper. The
$n$-level density is
\begin{eqnarray}
D_{n,\mathcal{F}}(\phi)\ :=\ \frac{1}{|\mathcal{F}|} \sum_{f\in
\mathcal{F}} \sum_{\substack{\ell_1,\dots, \ell_n \\ \ell_i \neq \pm
\ell_k}} \phi_1\left(\gamma_{f,\ell_1}\frac{\log
Q_f}{2\pi}\right)\cdots \phi_n\left(\gamma_{f,\ell_n}\frac{\log
Q_f}{2\pi}\right),
\end{eqnarray} where the $\phi_i$ are even Schwartz test functions
whose Fourier transforms have compact support, $\foh +
i\gamma_{f,\ell}$ runs through the non-trivial zeros of $L(s,f)$,
and $Q_f$ is the analytic conductor of $f$. As the $\phi_i$ are even
Schwartz functions, most of the contribution to
$D_{n,\mathcal{F}}(\phi)$ arises from the zeros  near the central
point; thus this statistic is well-suited to investigating the
low-lying zeros.

There are now many examples where the main term in number theory
agrees with the Katz-Sarnak conjectures (at least for suitably
restricted test functions), such as all Dirichlet characters,
quadratic Dirichlet characters, $L(s,\psi)$ with $\psi$ a character
of the ideal class group of the imaginary quadratic field
$\mathbb{Q}(\sqrt{-D})$, families of elliptic curves, weight $k$
level $N$ cuspidal newforms, symmetric powers of ${\rm GL}(2)$
$L$-functions, and certain families of ${\rm GL}(4)$ and ${\rm
GL}(6)$ $L$-functions (see
\cite{DM1,FI,Gu,HR,HM,ILS,KaSa2,Mil1,OS2,RR,Ro,Rub1,Yo2}).

For families of $L$-functions over function fields, the
corresponding classical compact group can be identified through the
monodromy. While the situation is less clear for $L$-functions over
number fields, there has been some recent progress. Due\~nez and
Miller \cite{DM2} show that for sufficiently nice families and
sufficiently small support, the main term in the $1$-level density
is determined by the first and second moments of the Satake
parameters, and a symmetry constant (which identifies the
corresponding classical compact group) may be associated to any nice
family such that the symmetry constant of the Rankin-Selberg
convolution of two families is the product of the symmetry
constants.

There are two avenues for further research. The first is to increase
the support of the test functions, which often leads to questions of
arithmetic interest (see for example Hypothesis S in \cite{ILS}).
Another is to identify lower order terms in the $1$-level density,
which is the subject of this paper. The main term in the $1$-level
density is independent of the arithmetic of the family, which
surfaces in the lower order terms. This is very similar to the
Central Limit Theorem. For nice densities the distribution of the
normalized sample mean converges to the standard normal. The main
term is controlled by the first two moments (the mean and the
variance of the density) and the higher moments surface in the rate
of convergence. This is similar to our situation, where the
universal main terms arise from the first and second moments of the
Satake parameters.

There are now several families where lower order terms have been
isolated in the $1$-level density \cite{FI,Mil2,Mil3,Yo1}; see also
\cite{BoKe}, where the Hardy-Littlewood conjectures are related to
lower order terms in the pair correlation of zeros of $\zeta(s)$
(see for example \cite{Be,BeKe,CS2,Ke} for more on lower terms of
correlations of Riemann zeros). Recently Conrey, Farmer and
Zirnbauer \cite{CFZ1,CFZ2} formulated conjectures for the averages
over families of $L$-functions of ratios of products of shifted
$L$-functions, such as \bea & & \sum_{d \le X}
\frac{L\left(\foh+\alpha,\chi_d\right)}{L\left(\foh+\gamma,\chi_d\right)}
 \ = \  \sum_{d \le X} \Bigg[
\frac{\zeta(1+2\alpha)}{\zeta(1+\alpha+\gamma)} A_D(\alpha;\gamma)
\nonumber\\ & & \ \ \ \ \ \ \ \ \ \ \ + \
\left(\frac{d}{\pi}\right)^{-\alpha}
\frac{\Gamma\left(\frac14-\frac{\alpha}2\right)}{\Gamma\left(\frac14+\frac{\alpha}2\right)}
\frac{\zeta(1-2\alpha)}{\zeta(1-\alpha+\gamma)}A_D(-\alpha;\gamma)\Bigg]
\ + \ O(X^{1/2 + \gep}) \eea (here $d$ ranges over even fundamental
discriminants, $-1/4 < \Re(\alpha) < 1/4$, $1/\log X \ll \Re(\gamma)
< 1/4$, and $A_D$ (we only give the definition for $\alpha =
\gamma$, as that is the only instance that occurs in our
applications) is defined in \eqref{eq:defnADAD'}). Their
$L$-functions Ratios Conjecture arises from using the approximate
functional equation, integrating term by term, and retaining only
the diagonal pieces (which they then `complete'); they also assume
uniformity in the parameters so that the resulting expressions may
be differentiated (this is an essential ingredient for $1$-level
density calculations). It is worth noting the incredible detail of
the conjecture, predicting all terms down to $O(X^{1/2+\gep})$.

There are many difficult computations whose answers can easily be
predicted through applications of the $L$-functions Ratios
Conjecture, ranging from $n$-level correlations and densities to
mollifiers and moments to vanishing at the central point (see
\cite{CS1}). While these are not proofs, it is extremely useful for
researchers to have a sense of what the answer should be. One common
difficulty in the subject is that often the number theory and random
matrix theory answers appear different at first, and much effort
must be spent on combinatorics to prove agreement (see for example
\cite{Gao,HM,Rub1,RS}); the analysis is significantly easier if one
knows what the final answer should be. Further, the Ratios
Conjecture often suggest a more enlightening way to group terms (see
for instance Remark \ref{rek:inflzeroszetazerosdirich}).

Our goal in this paper is to test the predictions of the Ratios
Conjecture for a specific family, that of quadratic Dirichlet
characters. We let $d$ be a fundamental discriminant. This means
(see \S5 of \cite{Da}) that either $d$ is a square-free number
congruent to 1 modulo 4, or $d/4$ is square-free and congruent to 2
or 3 modulo 4. If $\chi_d$ is the quadratic character associated to
the fundamental discriminant $d$, then if $\chi_d(-1) = 1$ (resp.,
$-1$) we say $d$ is even (resp., odd). If $d$ is a fundamental
discriminant then it is even (resp., odd) if $d > 0$ (resp., $d<0$).
We concentrate on even fundamental discriminants below, though with
very few changes our arguments hold for odd discriminants (for
example, if $d$ is odd there is an extra $1/2$ in certain Gamma
factors in the explicit formula).\\

\emph{For notational convenience we adopt the following conventions
throughout the paper:} \bi \item Let $X^\ast$ denote the number of
even fundamental discriminants at most $X$; thus $X^\ast = 3X/\pi^2
+ O(X^{1/2})$, and $X/\pi^2 + O(X^{1/2})$ of these have $4|d$ (see
Lemma \ref{lem:numdwithpdivided} for a proof).
\item In any sum over $d$, $d$ will range over even fundamental
discriminants unless otherwise specified. \ei

The goal of these notes is to calculate the lower order terms (on
the number theory side) as much as possible, as unconditionally as
possible, and then compare our answer to the prediction from the
$L$-functions Ratios Conjecture, given in the theorem below.

\begin{thm}[One-level density from the Ratios Conjecture \cite{CS1}]\label{thm:oneldRatios}
Let $g$ be an even Schwartz test function such that $\hg$ has finite
support. Let $X^\ast$ denote the number of even fundamental
discriminants at most $X$, and let $d$ denote a typical even
fundamental discriminant. Assuming the Ratios Conjecture for
$\sum_{d \le X} L(\foh+\alpha,\chi_d)/L(\foh+\gamma,\chi_d)$, we
have \bea\label{eq:ratios1lddirchar} & & \frac1{X^\ast}\sum_{d \le X
} \sum_{\gamma_d} g\left(\gamma_d \frac{\log
X}{2\pi}\right)\nonumber\\ & & = \ \frac1{X^\ast\log X} \intii
g(\tau) \sum_{d \le X} \Bigg[ \log\frac{d}{\pi} + \foh
\frac{\Gamma'}{\Gamma}\left(\frac14+\frac{i\pi \tau}{\log X}\right)+
\foh \frac{\Gamma'}{\Gamma}\left(\frac14-\frac{i\pi \tau}{\log
X}\right)\Bigg]d\tau\nonumber\\ & & \ \ + \ \frac2{X^\ast\log
X}\sum_{d \le X} \intii g(\tau) \Bigg[\frac{\zeta'}{\zeta}\left(1+
\frac{4\pi i\tau}{\log X}\right) + A_D'\left(\frac{2\pi i \tau}{\log
X};\frac{2\pi i \tau}{\log X}\right) \nonumber\\ & & \ \ -\ e^{-2\pi
i \tau \log(d/\pi)/\log X} \frac{\Gamma\left(\frac14-\frac{\pi i
\tau}{\log X}\right)}{\Gamma\left(\frac14+\frac{\pi i \tau}{\log
X}\right)}\ \zeta\left(1 - \frac{4\pi i \tau}{\log
X}\right)A_D\left(-\frac{2\pi i \tau}{\log X};\frac{2\pi i
\tau}{\log X}\right)\Bigg] d\tau \nonumber\\ & & \ \ +\
O(X^{-\foh+\gep}), \eea with \bea\label{eq:defnADAD'} A_D(-r,r) & \
= \ & \prod_p \left(1 - \frac1{(p+1)p^{1-2r}} -
\frac1{p+1}\right) \cdot \left(1 - \frac1p\right)^{-1} \nonumber\\
A_D'(r;r) &=& \sum_p \frac{\log p}{(p+1)(p^{1+2r}-1)}. \eea The
above is \be \frac1{X^\ast}\sum_{d \le X} \sum_{\gamma_d}
g\left(\gamma_d \frac{\log X}{2\pi}\right)  \ = \
\int_{-\infty}^\infty g(x) \left(1 - \frac{\sin(2\pi x)}{2\pi
x}\right) dx + O\left(\frac{1}{\log X}\right), \ee which is the
$1$-level density for the scaling limit of ${\rm USp}(2N)$. If
$\supp(\hg) \subset (-1,1)$, then the integral of $g(x)$ against
$-\sin(2\pi x)/2\pi x$ is $-g(0)/2$.

If we assume the Riemann Hypothesis, for $\supp(\hg) \subset
(-\sigma, \sigma) \subset (-1, 1)$ we have \bea & &
\frac{-2}{X^\ast\log X} \sum_{d\le X} \intii g(\tau)\ e^{-2\pi i
\tau \frac{\log(d/\pi)}{\log X}} \frac{\Gamma\left(\frac14-\frac{\pi
i \tau}{\log X}\right)}{\Gamma\left(\frac14+\frac{\pi i \tau}{\log
X}\right)}\ \zeta\left(1 - \frac{4\pi i \tau}{\log
X}\right)A_D\left(-\frac{2\pi i \tau}{\log X};\frac{2\pi i
\tau}{\log X}\right) d\tau\nonumber\\ & & \ \ \ \ \ \ \ \ = \
-\frac{g(0)}2 + O(X^{-\frac34(1-\sigma)+\gep}); \eea the error term
may be absorbed into the $O(X^{-1/2+\gep})$ error in
\eqref{eq:ratios1lddirchar} if $\sigma < 1/3$.
\end{thm}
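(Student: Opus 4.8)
The plan is to run the Conrey--Snaith recipe \cite{CS1} on the ratios-conjecture input displayed in the introduction and then to match the output against the ${\rm USp}(2N)$ kernel. First I would pass from the ratio to its logarithmic derivative: since $\frac{\partial}{\partial\alpha}\frac{L(\foh+\alpha,\chi_d)}{L(\foh+\gamma,\chi_d)}$ evaluated at $\alpha=\gamma=r$ is $\frac{L'}{L}(\foh+r,\chi_d)$, the ratios conjecture yields a (conjectural) formula for $\sum_{d\le X}\frac{L'}{L}(\foh+r,\chi_d)$ upon differentiating the conjectured expression in $\alpha$ and setting $\alpha=\gamma=r$. Then, for each $d$, I would write the sum over the zeros as a contour integral of $\frac{\Lambda'}{\Lambda}(s,\chi_d)$ (with $\Lambda$ the completed $L$-function, root number $+1$ for even fundamental discriminants) over two vertical lines and fold these using the functional equation; the archimedean factor of $\Lambda'/\Lambda$ produces the conductor/$\Gamma$-factor terms $\log\frac{d}{\pi}+\foh\frac{\Gamma'}{\Gamma}(\frac14+\frac{i\pi\tau}{\log X})+\foh\frac{\Gamma'}{\Gamma}(\frac14-\frac{i\pi\tau}{\log X})$, while the $L'/L$ part is replaced by the ratios output. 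Differentiating the latter: the first piece $\frac{\zeta(1+2\alpha)}{\zeta(1+\alpha+\gamma)}A_D(\alpha;\gamma)$ contributes $\frac{\zeta'}{\zeta}(1+2r)+A_D'(r;r)$ at $\alpha=\gamma=r$; in the second (``dual'') piece $1/\zeta(1-\alpha+\gamma)$ has a \emph{simple zero} at $\alpha=\gamma$, so differentiating the factor $(d/\pi)^{-\alpha}$ — which would otherwise bring down a term of size $\log d$ — contributes nothing, and one is left with exactly the clean dual term of \eqref{eq:ratios1lddirchar}. Rescaling by $\log X$ (setting $r=\frac{2\pi i\tau}{\log X}$) and changing variables gives \eqref{eq:ratios1lddirchar}.

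To identify this with the ${\rm USp}(2N)$ density, expand \eqref{eq:ratios1lddirchar} as $X\to\infty$. By Lemma \ref{lem:numdwithpdivided} and partial summation $\frac1{X^\ast}\sum_{d\le X}\log\frac{d}{\pi}=\log X+O(1)$, so the conductor term gives $\intii g(\tau)\,d\tau+O(1/\log X)=\int g(x)\,dx+O(1/\log X)$; the $\Gamma'/\Gamma$ and $A_D'$ terms are bounded, hence $O(1/\log X)$ after the $1/\log X$. In \eqref{eq:ratios1lddirchar} the apparent pole of $\frac{\zeta'}{\zeta}(1+\frac{4\pi i\tau}{\log X})$ at $\tau=0$ is exactly cancelled by that of $\zeta(1-\frac{4\pi i\tau}{\log X})$ in the dual term (using $A_D(-r;r)\to1$ as $r\to0$, which follows from the clean evaluation $A_D(-r;r)=\zeta(2)/\zeta(2-2r)$ obtained by simplifying the Euler product \eqref{eq:defnADAD'}); interpreting each term as a principal value, the $\zeta'/\zeta$ piece is then $O(1/\log X)$ (subtract its pole: the remainder is bounded and the pole integrates to $0$ by evenness of $g$), so the entire non-archimedean contribution reduces, up to $O(1/\log X)$, to the dual term. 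By the next paragraph this equals $-g(0)/2$ when $\supp(\hg)\subset(-1,1)$ (in general $-\foh\int_{-1}^1\hg(u)\,du$), and adding everything gives $\int g(x)\big(1-\frac{\sin 2\pi x}{2\pi x}\big)\,dx+O(1/\log X)$.

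For the dual term under RH I would write $g(\tau)=\intii\hg(u)e^{2\pi i\tau u}\,du$ and use $A_D(-r;r)=\zeta(2)/\zeta(2-2r)$ to rewrite $\zeta(1-2r)A_D(-r;r)=\zeta(2)\sum_{k\ge1}\frac{\varphi(k)}{k^2}k^{2r}$ (Euler's totient $\varphi$); after a small shift of the $\tau$-contour to legitimise this expansion, the combined oscillatory factor becomes $e^{2\pi i\tau\log(k^2\pi/d)/\log X}$, the $\tau$-integral reduces essentially to $\hg\big(\frac{\log(k^2\pi/d)}{\log X}\big)$, and $\supp(\hg)\subset(-\sigma,\sigma)$ forces $k\le X^{(1+\sigma)/2}$. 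Carrying out the resulting double sum over $d\le X$ and $k$ by partial summation against the count of even fundamental discriminants (Lemma \ref{lem:numdwithpdivided}) isolates the main term, which collapses to a principal-value integral of the shape $\frac1{2\pi i}\,{\rm PV}\intii\frac{g(\tau)e^{-2\pi i\tau}}{\tau}\,d\tau$; since ${\rm PV}\intii\frac{e^{i\lambda\tau}}{\tau}\,d\tau=i\pi\,{\rm sgn}(\lambda)$ and $\supp(\hg)\subset(-\sigma,\sigma)\subset(-1,1)$ makes the sign of the relevant $\lambda$ constant ($=-1$), this equals $-g(0)/2$. The power-saving error arises from (i) the discrepancy between the arithmetic $d$-sum and its smooth approximation and (ii) the truncation of the $k$-expansion; RH enters to bound the error term in counting even fundamental discriminants in intervals (and $\zeta$ off its pole), and optimising the split between the lengths of the $d$- and $k$-summations yields $O(X^{-\frac34(1-\sigma)+\gep})$, which is $\le O(X^{-\foh+\gep})$ precisely for $\sigma<\tfrac13$.

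The step I expect to be the main obstacle is this last one. The genuinely delicate point is the pole of $\zeta(s)$ at $s=1$: it cannot be estimated away term by term — it is tamed in \eqref{eq:ratios1lddirchar} only through the $\zeta'/\zeta$--dual cancellation, and in the RH estimate only by keeping the $d$-summation and the $\zeta$-factor together (a term-by-term bound diverges). Extracting the honest power saving $\frac34(1-\sigma)$, rather than a mere $O(1/\log X)$, is exactly what makes both the RH input and the careful parameter bookkeeping unavoidable.
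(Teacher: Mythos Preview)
Your derivation of \eqref{eq:ratios1lddirchar} via the Conrey--Snaith recipe and your extraction of the ${\rm USp}(2N)$ main term are along the right lines and match what is done in \cite{CS1} and in \S\ref{sec:ratiosconjterms}. The substantive issue is your treatment of the dual term $R(g;X)$ under RH.

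Your route is genuinely different from the paper's. The paper does \emph{not} expand $\zeta(1-2r)A_D(-r;r)$ as a Dirichlet series over $\varphi(k)$. Instead it keeps the integral representation and shifts the $\tau$-contour from $\Im(\tau)=0$ to $\Im(\tau)=-w\,\frac{\log X}{2\pi}$ with $w$ running from $0$ to $2$. Along the way one crosses: (a) the pole of $\zeta(1-w-\frac{4\pi i\tau}{\log X})$ at $w=\tau=0$, whose half-residue is exactly $-g(0)/2$; and (b) the poles of $1/\zeta(2-2w-\frac{4\pi i\tau}{\log X})$ coming from the nontrivial zeros of $\zeta$. Under RH these sit at $w=3/4$, and the bound $g(\tau-iw\frac{\log X}{2\pi})\ll X^{\sigma w}(1+\tau^2)^{-B}$ (Lemma \ref{lem:usefulfacts}(3)) together with polynomial control of the Gamma ratio (Lemma \ref{lem:usefulfacts}(4)) makes their total contribution $O(X^{-3/4(1-\sigma)+\gep})$. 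The integral on the shifted line $w=2$ is negligible. So in the paper the exponent $\tfrac34$ is literally the value of $w$ at which $2-2w=\tfrac12$; RH enters solely to pin the nontrivial zeros to that line.

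This is where your sketch has a gap. You say RH is used ``to bound the error term in counting even fundamental discriminants in intervals''; but Lemma \ref{lem:numdwithpdivided} gives that count with error $O(X^{1/2})$ \emph{unconditionally}, and RH for $\zeta(s)$ has no bearing on it. Your claimed optimisation between the $d$- and $k$-sums is not spelled out, and I do not see how the specific exponent $\tfrac34(1-\sigma)$ would fall out of it; in your framework the natural errors are of size $X^{-1/2}$ (from the discriminant count) and there is no truncation error in $k$ at all once $\supp(\hg)\subset(-\sigma,\sigma)$ is invoked. A second, smaller issue: the Gamma ratio $\Gamma(\tfrac14-\frac{\pi i\tau}{\log X})/\Gamma(\tfrac14+\frac{\pi i\tau}{\log X})$ is not an exponential in $\tau$, so the $\tau$-integral does not literally collapse to $\hg(\cdot)$; you would need either to Taylor-expand it (which, as the remark after Lemma \ref{lem:contrRgX} notes, only yields errors of size $\log^{-N}X$, not a power saving) or to absorb it into a contour argument --- at which point you are back to the paper's method.

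In short: to obtain the precise $O(X^{-\frac34(1-\sigma)+\gep})$ with RH, shift the contour and pick up the residues at the critical zeros of $\zeta$; the exponent $3/4$ is the depth of that line in the $w$-variable, not the output of a length optimisation.
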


The conclusions of the above theorem are phenomenal, and demonstrate
the power of the Ratios Conjecture. Not only does its main term
agree with the Katz-Sarnak conjectures for arbitrary support, but it
calculates the lower order terms up to size $O(X^{-1/2+\gep})$.
While Theorem \ref{thm:oneldRatios} is conditional on the Ratios
Conjecture, the following theorem is not, and provides highly
non-trivial support for the Ratios Conjecture.

\begin{thm}[One-level density for quadratic Dirichlet
characters]\label{thm:1ldquadevenfdfromNT} Let the notation be as in
Theorem \ref{thm:oneldRatios}, with $\supp(\hg) \subset (-\sigma,
\sigma)$. \ben \item Up to terms of size
$O(X^{-(1-\sigma)/2+\gep})$, the $1$-level density for the family of
quadratic Dirichlet characters with even fundamental discriminants
at most $X$ agrees with \eqref{eq:ratios1lddirchar} (the prediction
from the Ratios
Conjecture). 
\item If we instead consider the family $\{8d:\ 0 < d \le X, \ d$
an odd, positive square-free fundamental discriminant$\}$, then the
$1$-level density agrees with the prediction from the Ratios
Conjecture
up to terms of size $O(X^{-1/2}+X^{-(1 - \frac32\sigma)+\gep} +
X^{-\frac34(1-\sigma)+\gep})$. In particular, if $\sigma < 1/3$ then
the number theory calculation agrees with the Ratios Conjecture up
to errors at most $O(X^{-1/2+\gep})$.\een
\end{thm}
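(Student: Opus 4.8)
The plan is to establish Theorem \ref{thm:1ldquadevenfdfromNT} by computing the $1$-level density directly from the explicit formula and matching the resulting expression term-by-term against \eqref{eq:ratios1lddirchar}. First I would apply the explicit formula for $L(s,\chi_d)$ to write
\[
\sum_{\gamma_d} g\!\left(\gamma_d \frac{\log X}{2\pi}\right) \ = \ \widehat g(0)\,\frac{\log(d/\pi)}{\log X} + (\text{Gamma-factor terms}) \ -\ \frac{2}{\log X}\sum_{p,k} \frac{\log p}{p^{k/2}}\,\chi_d(p^k)\,\widehat g\!\left(\frac{k\log p}{\log X}\right) + \cdots,
\]
where the Gamma-factor terms, after averaging over $d \le X$ and Fourier inversion, produce exactly the first bracket in \eqref{eq:ratios1lddirchar} (the $\log(d/\pi)$ and the two $\Gamma'/\Gamma$ pieces). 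The genuinely arithmetic content is the prime sum: I would average $\frac1{X^\ast}\sum_{d\le X}\chi_d(p^k)$ over even fundamental discriminants. The $k$ even and $k$ odd cases behave very differently — for $k$ odd the character sum exhibits square-root cancellation (this is where the $X^{-(1-\sigma)/2}$ type errors and, in part (2), the extra $X^{-(1-\frac32\sigma)}$ loss from working with $8d$ rather than all even discriminants will enter), while for $k$ even one gets a main term $\frac1{X^\ast}\sum_{d\le X}\chi_d(p^k) \approx \prod_{p\mid d}(\cdots)$, which after the usual manipulations with the Euler product is what generates $\zeta'/\zeta(1 + 4\pi i\tau/\log X)$ and $A_D'$.

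The key steps, in order, are: (i) invoke the explicit formula and isolate the conductor/Gamma contribution, identifying it with the first bracket of \eqref{eq:ratios1lddirchar}; (ii) handle the $k=1$ prime sum $\sum_p \frac{\log p}{\sqrt p}\,\widehat g(\tfrac{\log p}{\log X})\cdot\frac1{X^\ast}\sum_{d\le X}\chi_d(p)$ by a Pólya–Vinogradov / large-sieve type bound on the inner character sum, showing it contributes only to the admissible error term provided $\supp(\widehat g)$ is small enough; (iii) extract the main term from the $k$ even sum by writing $\frac1{X^\ast}\sum_{d\le X}\chi_d(p^{2j})$ as (density of $d$ with $p\nmid d$) plus a manageable remainder, using Lemma \ref{lem:numdwithpdivided} for the count of $d$ with $4\mid d$; (iv) reorganize the resulting Euler product over $p$ (summed over $j$) into the combination $\frac{\zeta'}{\zeta}(1+\tfrac{4\pi i\tau}{\log X}) + A_D'(\tfrac{2\pi i\tau}{\log X};\tfrac{2\pi i\tau}{\log X})$ — this is the combinatorial heart and must reproduce \eqref{eq:defnADAD'} exactly; (v) observe that the first line of \eqref{eq:ratios1lddirchar} contributes the ${\rm USp}(2N)$ main term $-g(0)/2$ after integration, while the $A_D(-r;r)$ / $\Gamma$-ratio / $\zeta(1-4\pi i\tau/\log X)$ piece is a genuine lower-order ``dual'' term matched by the $k$ odd contribution under RH (this is the content of the final display of Theorem \ref{thm:oneldRatios}, which we may assume); (vi) collect error terms and verify that for $\sigma < 1/3$ all of them are $O(X^{-1/2+\gep})$.

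The main obstacle I anticipate is step (iv): showing that the arithmetic factor produced by averaging $\chi_d(p^{2j})$ over \emph{even fundamental} discriminants reorganizes \emph{precisely} into the Ratios Conjecture's $\zeta'/\zeta + A_D'$ rather than something merely equal up to lower order. The subtlety is that fundamental discriminants carry congruence conditions mod $4$ and square-freeness, so the naive "probability $\frac{p}{p+1}$ that $p\nmid d$" must be justified carefully (via Lemma \ref{lem:numdwithpdivided} and a Möbius-type sieve), and the $p=2$ local factor has to be checked by hand since $2$ behaves specially for even discriminants; getting the Euler product to collapse to $(1-\tfrac1{(p+1)p^{1-2r}} - \tfrac1{p+1})(1-\tfrac1p)^{-1}$ on the nose is where the bookkeeping is heaviest. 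A secondary obstacle is the error analysis in part (2): restricting to $8d$ with $d$ odd positive square-free forces the use of quadratic reciprocity to turn $\chi_{8d}(p)$ into $\chi_p(\pm 8d)$-type sums, and controlling these with the correct $X^{-(1-\frac32\sigma)+\gep}$ loss requires a careful large-sieve estimate with the extra factor of $3/2$ in the exponent accounted for — but this is essentially a known technique rather than a new difficulty.
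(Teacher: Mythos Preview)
Your outline follows the paper's overall architecture (explicit formula $\to$ match conductor/Gamma terms $\to$ split prime sum by parity of $k$), but step (v) contains a genuine conceptual error that would derail the matching. You assert that the $A_D(-r;r)\cdot\Gamma\text{-ratio}\cdot\zeta(1-\tfrac{4\pi i\tau}{\log X})$ piece of \eqref{eq:ratios1lddirchar} is a ``lower-order dual term matched by the $k$ odd contribution.'' This is backwards. That piece is exactly $R(g;X)$, and by the last display of Theorem~\ref{thm:oneldRatios} (which you correctly cite) it equals $-g(0)/2$ up to the stated error --- so it is \emph{not} lower-order at all. On the number-theory side, for $\supp(\hg)\subset(-1,1)$ the odd-$k$ sum $S_{\rm odd}$ is purely an error term; it does not produce $-g(0)/2$ or anything else of size $O(1)$. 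The $-g(0)/2$ that matches $R(g;X)$ actually emerges from the \emph{even}-$k$ sum: when you write $S_{{\rm even};1}=-\tfrac{2}{\log X}\sum_n\tfrac{\Lambda(n)}{n}\hg(\tfrac{2\log n}{\log X})$ and convert it to a contour integral via Perron's formula, the pole of $\zeta(s)$ at $s=1$ contributes $-g(0)/2$, and what remains is the $\tfrac{2}{\log X}\int g(\tau)\tfrac{\zeta'}{\zeta}(1+\tfrac{4\pi i\tau}{\log X})\,d\tau$ piece. Your step (iv) as written would produce only $\zeta'/\zeta + A_D'$ and miss this $-g(0)/2$, leaving the matching off by a constant.

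Two smaller corrections. First, the ``main obstacle'' you anticipate in step (iv) --- collapsing the Euler product to $A_D'$ exactly --- is in fact routine: once you use Lemma~\ref{lem:numdwithpdivided} to get $\tfrac{1}{X^\ast}\sum_{d\le X,\,p\mid d}1 = \tfrac{1}{p+1}+O(X^{-1/2})$, the $A_D'$ term drops out by summing the geometric series in $\ell$; the $\zeta'/\zeta$ piece is handled separately by Perron, not by Euler-product combinatorics. Second, for part~(2) you propose a large-sieve/reciprocity argument to get $X^{-(1-\frac32\sigma)+\gep}$, and you describe the $8d$ family as incurring an ``extra loss.'' In fact the $8d$ family is chosen precisely because it \emph{improves} the bound: the paper (following Gao and Soundararajan) applies Poisson summation to the $d$-sum, converting long character sums to short ones, and the $X^{-(1-\frac32\sigma)}$ arises from balancing the smoothing parameter $U$, the sieve cutoff $Z$, and the prime-sum length $Y=X^\sigma$ in the resulting error terms. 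A large-sieve bound alone (Jutila's bound, essentially what your step (ii) describes) only gives $O(X^{-(1-\sigma)/2+\gep})$, which is part~(1), not part~(2).
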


\begin{rek}
The above theorem indicates that, at least for the family of
quadratic Dirichlet characters and suitably restricted test
functions, the Ratios Conjecture \emph{is} predicting all lower
order terms up to size $O(X^{-1/2 +\gep})$. This is phenomenal
agreement between theory and conjecture. Previous investigations of
lower order terms in $1$-level densities went as far as $O(\log^N
X)$ for some $N$; here we are getting square-root agreement, and
strong evidence in favor of the Ratios Conjecture. 
\end{rek}

\begin{rek}[Influence of zeros of $\zeta(s)$ on lower order
terms]\label{rek:inflzeroszetazerosdirich} From the expansion in
\eqref{eq:ratios1lddirchar} we see that one of the lower order terms
(arising from the integral of $g(\tau)$ against $\zeta'(1+4\pi i
\tau/\log X)/\zeta(1+4\pi i \tau/\log X)$) in the $1$-level density
for the family of quadratic Dirichlet characters is controlled by
the non-trivial zeros of $\zeta(s)$. This phenomenon has been noted
by other researchers (Bogomolny, Conrey, Keating, Rubinstein,
Snaith); see \cite{CS1,BoKe,HKS,Rub2} for more details, especially
\cite{Rub2} for a plot of the influence of zeros of $\zeta(s)$ on
zeros of $L$-functions of quadratic Dirichlet characters.
\end{rek}

The proof of Theorem \ref{thm:1ldquadevenfdfromNT} starts with the
Explicit Formula, which relates sums over zeros to sums over primes
(for completeness a proof is given in Appendix
\ref{sec:explicitformula}). For convenience to researchers
interested in odd fundamental discriminants, we state it in more
generality than we need.

\begin{thm}[Explicit Formula for a family of Quadratic Dirichlet
Characters]\label{thm:oneldNT} Let $g$ be an even Schwartz test
function such that $\hg$ has finite support. For $d$ a fundamental
discriminant let $a(\chi_d) = 0$ if $d$ is even ($\chi_d(-1) = 1$)
and $1$ otherwise. Consider a family $\mathcal{F}(X)$ of fundamental
discriminants at most $X$ in absolute value. We have \bea & &
\frac1{|\mathcal{F}(X)|}\sum_{d \in \mathcal{F}(X)} \sum_{\gamma_d}
g\left(\gamma_d \frac{\log X}{2\pi}\right)\nonumber\\ & & =\
\frac1{|\mathcal{F}(X)|\log X} \intii g(\tau) \sum_{d \in
\mathcal{F}(X)} \Bigg[ \log\frac{|d|}{\pi} + \foh
\frac{\Gamma'}{\Gamma}\left(\frac14+\frac{a(\chi_d)}2+\frac{i\pi
\tau}{\log X}\right)\nonumber\\ & & \ \ \ \ + \foh
\frac{\Gamma'}{\Gamma}\left(\frac14+\frac{a(\chi_d)}2-\frac{i\pi
\tau}{\log X}\right)\Bigg]d\tau - \frac{2}{|\mathcal{F}(X)|} \sum_{d
\in \mathcal{F}(X)} \sum_{k=1}^\infty \sum_p \frac{\chi_d(p)^k\log
p}{p^{k/2}\log X}\ \hg\left(\frac{\log p^k}{\log X}\right).\nonumber\\
\eea
\end{thm}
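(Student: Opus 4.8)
The plan is to derive the Explicit Formula directly from the completed functional equation of $L(s,\chi_d)$ for each fundamental discriminant $d$ in the family, and then average over the family. First I would recall that for a primitive quadratic character $\chi_d$ (with $\chi_d(-1)=(-1)^{a(\chi_d)}$), the completed $L$-function
\be
\Lambda(s,\chi_d) \ = \ \left(\frac{|d|}{\pi}\right)^{(s+a(\chi_d))/2} \Gamma\!\left(\frac{s+a(\chi_d)}{2}\right) L(s,\chi_d)
\ee
is entire of order $1$ and satisfies $\Lambda(s,\chi_d) = \Lambda(1-s,\chi_d)$ (the root number is $+1$ for quadratic characters). Taking the logarithmic derivative and forming its Hadamard product expansion over the nontrivial zeros $\rho_d = \foh + i\gamma_d$, one obtains the standard identity
\be
\sum_{\gamma_d} h(\gamma_d) \ = \ \frac{1}{2\pi}\intii h(\tau)\left[\log\frac{|d|}{\pi} + \foh\frac{\Gamma'}{\Gamma}\!\left(\frac{s+a(\chi_d)}{2}\right)\Big|_{s=\foh+i\tau} + \foh\frac{\Gamma'}{\Gamma}\!\left(\frac{s+a(\chi_d)}{2}\right)\Big|_{s=\foh-i\tau}\right]d\tau \ - \ \frac{1}{\pi}\intii h(\tau)\, \Re\sum_{k,p}\frac{\chi_d(p)^k\log p}{p^{k(\foh+i\tau)}}\,d\tau
\ee
for a suitable even test function $h$, by contour-shifting the logarithmic derivative of $\Lambda$ past the critical line and using the Dirichlet series $-L'/L(s,\chi_d)=\sum_{k,p}\chi_d(p)^k\log p\cdot p^{-ks}$ valid for $\Re(s)>1$.

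Next I would specialize $h$ to match the scaling in the statement: set $h(\tau) = g\!\left(\tau\frac{\log X}{2\pi}\right)$, so that $\frac{1}{2\pi}\intii h(\tau)\,F(\tau)\,d\tau$ becomes $\frac{1}{\log X}\intii g(\tau)\,F\!\left(\frac{2\pi\tau}{\log X}\right)d\tau$ after the change of variables $\tau\mapsto \frac{2\pi\tau}{\log X}$; this accounts for the $\frac{1}{\log X}$ and the arguments $\frac14+\frac{a(\chi_d)}{2}\pm\frac{i\pi\tau}{\log X}$ appearing in the Gamma factors. For the prime sum, the Fourier-analytic input is that $\frac{1}{2\pi}\intii g\!\left(\tau\frac{\log X}{2\pi}\right)p^{-ik\tau}\,d\tau = \frac{1}{\log X}\,\hg\!\left(\frac{\log p^k}{\log X}\right)$ by the definition of the Fourier transform (with the convention $\hg(\xi)=\intii g(x)e^{-2\pi i x\xi}dx$, so $\hg$ real-valued since $g$ is even). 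Since $g$ is even its Fourier transform is even, and combining the $k$-th prime-power term from both the $p^{-ik\tau}$ and $p^{ik\tau}$ pieces of $\Re(\cdot)$ doubles it, producing exactly the factor $2$ and the single sum $\sum_{k\ge1}\sum_p \frac{\chi_d(p)^k\log p}{p^{k/2}\log X}\hg\!\left(\frac{\log p^k}{\log X}\right)$ in the claimed formula. Finally I would sum over $d\in\mathcal{F}(X)$ and divide by $|\mathcal{F}(X)|$; this is purely linear and the identity holds termwise, so no interchange issues arise beyond those already handled per $d$.

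The main technical obstacle is justifying the contour shift and the convergence of the sums: one must move the line of integration of $\frac{\Lambda'}{\Lambda}(s,\chi_d)$ from $\Re(s)=\foh+\delta$ (where the Dirichlet series for $-L'/L$ converges absolutely once we also split off the $\log(|d|/\pi)$ and Gamma contributions coming from the Archimedean factor) across the critical strip, picking up precisely the zeros $\rho_d$ as residues, and then close the contour on the left using the functional equation to fold $\Re(s)<\foh$ back to $\Re(s)>\foh$. This requires the standard bounds on the growth of $\Lambda'/\Lambda$ in vertical strips together with the rapid decay of $h$ (which holds since $g$ is Schwartz), and the truncation of the prime sum is harmless because $\hg$ has \emph{finite} support, so only finitely many prime powers $p^k$ with $\log p^k \le \sigma\log X$ contribute for each $X$ — in particular the prime sum is a finite sum and all interchanges of summation and integration are trivially valid. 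Since the full derivation is the classical Riemann–von Mangoldt / Guinand–Weil explicit formula adapted to $L(s,\chi_d)$, I would relegate the detailed verification to Appendix~\ref{sec:explicitformula} as the authors indicate, and in the body simply record the resulting identity.
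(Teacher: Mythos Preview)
Your proposal is correct and follows essentially the same route as the paper: integrate $\Lambda'(s,\chi_d)/\Lambda(s,\chi_d)$ against $H(s)=\phi((s-\tfrac12)/i)$ on a vertical line to the right of the critical strip, shift contours to pick up the zeros as residues, use the functional equation $\Lambda(s)=\Lambda(1-s)$ to fold the left integral back to the right (producing the symmetrized $H(s)+H(1-s)$), then separate the $\log(|d|/\pi)$ and $\Gamma'/\Gamma$ pieces on $\Re(s)=\tfrac12$ and evaluate the $L'/L$ piece via its Dirichlet series to get the prime sum; finally rescale $\phi$ to $g$ and average over $d$. The only cosmetic difference is that the paper works explicitly with the lines $\Re(s)=3/2$ and $\Re(s)=-1/2$ rather than invoking the Hadamard product, but the mechanism is identical.
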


As our family has only even fundamental discriminants, all
$a(\chi_d)=0$. The terms arising from the conductors (the
$\log(|d|/\pi)$ and the $\Gamma'/\Gamma$ terms) agree with the
Ratios Conjecture. We are reduced to analyzing the sums of
$\chi_d(p)^k$ and showing they agree with the remaining terms in the
Ratios Conjecture. As our characters are quadratic, this reduces to
understanding sums of $\chi_d(p)$ and $\chi_d(p)^2$. We first
analyze the terms from the Ratios Conjecture in
\S\ref{sec:ratiosconjterms} and then we analyze the character sums
in \S\ref{sec:numbthterms}. We proceed in this order as one of the
main uses of the Ratios Conjecture is in predicting simple forms of
the answer; in particular, it suggests non-obvious simplifications
of the number theory sums.

\setcounter{equation}{0}

\section{Analysis of the terms from the Ratios
Conjecture.}\label{sec:ratiosconjterms}

We analyze the terms in the $1$-level density from the Ratios
Conjecture (Theorem \ref{thm:oneldRatios}). The first piece
(involving $\log(d/\pi)$ and $\Gamma'/\Gamma$ factors) is already
matched with the terms in the Explicit Formula arising from the
conductors and $\Gamma$-factors in the functional equation. In
\S\ref{sec:numbthterms} we match the next two terms (the integral of
$g(\tau)$ against $\zeta'/\zeta$ and $A_D'$) to the contributions
from the sum over $\chi_d(p)^{k}$ for $k$ even; we do this for test
functions with arbitrary support. The number theory is almost equal
to this; the difference is the presence of a factor $-g(0)/2$ from
the even $k$ terms, which we match to the remaining piece from the
Ratios Conjecture.

This remaining piece is the hardest to analyze. We denote it by
\bea\label{eq:ratiospiecenotinNT} R(g;X) & \ = \ &
-\frac2{X^\ast\log X}\sum_{d \le X} \intii g(\tau)
 e^{-2\pi i \tau \frac{\log(d/\pi)}{\log X}}
\frac{\Gamma\left(\frac14-\frac{\pi i \tau}{\log
X}\right)}{\Gamma\left(\frac14+\frac{\pi i \tau}{\log
X}\right)}\nonumber\\ & & \ \ \ \ \ \ \cdot \ \zeta\left(1 -
\frac{4\pi i \tau}{\log X}\right)A_D\left(-\frac{2\pi i \tau}{\log
X};\frac{2\pi i \tau}{\log X}\right)d\tau, \eea with (see
\eqref{eq:defnADAD'}) \bea A_D(-r,r) & \ = \ & \prod_p \left(1 -
\frac1{(p+1)p^{1-2r}} - \frac1{p+1}\right) \cdot \left(1 -
\frac1p\right)^{-1}. \eea

There is a contribution to $R(g;X)$ from the pole of $\zeta(s)$. The
other terms are at most $O(1/\log X)$; however, if the support of
$\hg$ is sufficiently small then these terms contribute
significantly less.

\begin{lem}\label{lem:contrRgX} Assume the Riemann Hypothesis.
If $\supp(\hg) \subset (-\sigma, \sigma)$ then \be R(g;X)\ =\
-\frac{g(0)}2 + O(X^{-\frac34(1-\sigma)+\gep}).\ee In particular, if
$\sigma < 1/3$ then $R(g;X) = -\foh g(0) +O(X^{-\foh + \gep})$.
\end{lem}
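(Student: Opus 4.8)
The key is to recognize that $R(g;X)$ is, after unfolding the Fourier transform, essentially a contour integral in a single complex variable, and that the only contribution of size larger than a power of $X^{-1}$ comes from the pole of $\zeta(s)$ at $s=1$. So the plan is: (i) rewrite $R(g;X)$ as a complex integral; (ii) shift the contour of integration to pick up the residue at the pole; (iii) estimate the residue term to show it equals $-g(0)/2$ plus lower order; and (iv) bound the shifted integral using the Riemann Hypothesis together with standard convexity/subconvexity-type bounds for $\zeta$ on vertical lines, tracking how the answer depends on $\supp(\hg)\subset(-\sigma,\sigma)$.

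First I would substitute $\tau$ by a new variable; since $g$ is an even Schwartz function and $\hg$ is compactly supported, it is natural to write $g(\tau) = \int \hg(u) e^{2\pi i u \tau}\, du$ and then swap the $\tau$-integral with the $u$-integral (and with the sum over $d$). The $\tau$-integral that results, with integrand $e^{2\pi i \tau(u - \log(d/\pi)/\log X)}$ times the ratio of Gamma factors times $\zeta(1 - 4\pi i\tau/\log X) A_D(\cdots)$, should be recast: set $s = 1 - 4\pi i \tau/\log X$, so that $\tau = \frac{\log X}{4\pi i}(1-s)$ and the $\tau$-line $\R$ becomes the vertical line $\Re(s) = 1$. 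Then $R(g;X)$ becomes $-\frac{2}{X^\ast \log X}\sum_{d\le X}$ of a constant times $\int_{(1)} (\text{entire factors})\, \zeta(s)\, A_D\!\left(\tfrac{s-1}{2};\tfrac{1-s}{2}\right)\, (\text{something})^{(1-s)\log X/(4\pi i)}\, ds$, where the "something" combines $e^{2\pi i u}$ and $\pi/d$ into a quantity whose logarithm controls the exponential growth/decay as we move the contour. The precise bookkeeping of this change of variables is the first place to be careful, but it is routine.

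Next I would move the contour from $\Re(s)=1$ to $\Re(s) = 1/2 + \delta$ for a small $\delta>0$ (or as far left as the region of absolute convergence of the Euler product defining $A_D$ permits — note $A_D(-r,r)$ converges for $\Re(r)$ in a neighborhood of $0$, i.e. for $s$ near $1$, so one cannot push too far and must keep track of the domain of $A_D$). Crossing $s=1$ picks up a residue: $\zeta(s)$ has a simple pole there with residue $1$, while the Gamma ratio is $1$ at the corresponding point ($\tau=0$), $A_D(0;0)$ is a finite nonzero constant which one checks equals $1$ from \eqref{eq:defnADAD'} (the Euler factor at $s=1$ telescopes), and the exponential factor is $1$ as well; collecting these and carrying out the remaining $u$-integral against $\hg$ recovers $\int \hg(u)\,du \cdot(\text{const})$. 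Since $g$ even and Schwartz gives $\int \hg(u)\,du = g(0)$, and the sum over $d$ together with the $1/(X^\ast \log X)$ and the residue computation produces exactly the factor $-g(0)/2$ — here one uses $\sum_{d\le X} 1 = X^\ast$, so the $d$-sum is trivial for the residue term. I would double-check the constant $-1/2$ by comparing with the known $\usp$ density, since \eqref{eq:ratios1lddirchar} already asserts this is where the $-\frac{\sin(2\pi x)}{2\pi x}$ term must come from.

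The main obstacle, and the step deserving the most care, is bounding the shifted integral on $\Re(s) = 1/2+\delta$ to obtain the error $O(X^{-\frac34(1-\sigma)+\gep})$. On this line one invokes RH to control $\zeta(s)$ (giving at most $X^\gep$ growth in the relevant range, by the standard consequence $\zeta(1/2 + it) \ll_\gep (1+|t|)^\gep$), one bounds $A_D$ trivially (it is a convergent Euler product, uniformly bounded on the shifted line provided $\delta$ is small enough that we stay in its domain), and one uses the rapid decay of the Gamma ratio and of $\hg$ (Schwartz) in the imaginary direction to make the $s$- and $u$-integrals converge with only a $\log X$ or $X^\gep$ cost. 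The payoff comes from the exponential factor: moving $\Re(s)$ from $1$ to $1/2+\delta$ changes the factor $(\pi e^{2\pi i u}/d)^{(1-s)\log X/(4\pi i)}$ — wait, more precisely, the modulus of the exponential becomes $d^{-(1/2-\delta)} \cdot (\cdots)$ type expressions, and since $d$ ranges up to $X$ and the support condition $|u| < \sigma$ forces the worst case, one gets a saving of $X^{-(1/2-\delta)(1-\sigma) + \gep}$ roughly; optimizing $\delta$ (the constraint being the domain of $A_D$ and the pole of $\zeta$, which limit how far left one can go, effectively $\delta$ up to about $1/4$) yields the stated exponent $\frac34(1-\sigma)$. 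I would need to be careful that the power of $d$ saved, summed against $\sum_{d\le X} d^{-\theta} \ll X^{1-\theta}$, combines correctly with the $1/X^\ast$ normalization; this is where the exponent $\frac34(1-\sigma)$ must be extracted precisely, and it is the only genuinely delicate estimate. The final sentence, that $\sigma<1/3$ makes this $O(X^{-1/2+\gep})$, is then immediate since $\frac34(1-\sigma) > \frac34\cdot\frac23 = \frac12$.
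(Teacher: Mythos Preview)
Your contour-shift strategy and the identification of the residue at the pole of $\zeta$ with $-g(0)/2$ are both correct, but the error analysis has a genuine gap: the shift does not go far enough, and as written the argument yields at best $O(X^{-(1-\sigma)/2+\gep})$, not $O(X^{-\frac34(1-\sigma)+\gep})$.

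The missing ingredient is the closed form
\[
A_D(-r,r)\ =\ \frac{\zeta(2)}{\zeta(2-2r)}\ =\ \frac{\zeta(2)}{\zeta(1+s)}
\]
(in your variable $s=1-2r$), which the paper records as part (1) of Lemma~\ref{lem:usefulfacts}. You treat $A_D$ only through its Euler product and cite its ``domain of convergence'' as the barrier; that barrier is at $\Re(s)=0$. A direct computation shows that on the line $\Re(s)=c$ the combined exponential $X^{u(1-s)/2}(d/\pi)^{-(1-s)/2}$, after integrating $|u|<\sigma$, summing $d\le X$, and dividing by $X^\ast$, gives $X^{-(1-\sigma)(1-c)/2+\gep}$. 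Thus at $c=\foh+\delta$ you obtain only $X^{-(1-\sigma)/4+\gep}$, and even pushing to $c=0^+$ gives only $X^{-(1-\sigma)/2+\gep}$ --- precisely the unconditional bound noted in the remark after the lemma. No choice of $\delta$ in the range you indicate recovers the exponent $\tfrac34(1-\sigma)$.

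With the identity, $A_D$ continues meromorphically, and the genuine obstruction to shifting further left is the appearance of \emph{poles} of the integrand at the nontrivial zeros of $\zeta(1+s)$, located at $\Re(s)=-\foh$ under RH; the pole of the Gamma factor at $s=0$ is cancelled by the pole of $\zeta(1+s)$ there. The paper shifts all the way to $w=2$ (equivalently $\Re(s)=-3$), and it is the residues picked up at $w=3/4$ (i.e.\ $\Re(s)=-\foh$) that produce the error $O(X^{-\frac34(1-\sigma)+\gep})$. So the role of RH is not, as you suggest, to bound $\zeta(s)$ in the numerator via Lindel\"of; it is to pin the zeros of the $\zeta$-factor in the \emph{denominator} (coming from $A_D$) to the line $\Re(s)=-\foh$. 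Incidentally, the paper does not unfold $g$ via $\hg$ but instead shifts $\tau$ directly into the complex plane, using the bound $g(\tau-iw\tfrac{\log X}{2\pi})\ll X^{\sigma w}(\tau^2+(w\tfrac{\log X}{2\pi})^2)^{-B}$; this is equivalent bookkeeping to your approach and not the source of the discrepancy.
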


\begin{rek} If we do not assume the Riemann Hypothesis we may prove
a similar result. The error term is replaced with
$O(X^{-(1-\frac{\theta}2)(1-\sigma) + \gep})$, where $\theta$ is the
supremum of the real parts of zeros of $\zeta(s)$. As $\theta \le
1$, we may always bound the error by $O(X^{-(1-\sigma)/2+\gep})$.
Interestingly, this is the error we get in analyzing the number
theory terms $\chi(p)^k$ with $k$ odd by applying Jutila's bound
(see \S\ref{sec:analyzingSoddJutila}); we obtain a better bound of
$O(X^{-(1-\frac32\sigma)})$ by using Poisson summation to convert
long character sums to shorter ones (see
\S\ref{sec:analyzingSoddPoissonSum}).
\end{rek}

\begin{rek} The proof of Lemma \ref{lem:contrRgX} follows from
shifting contours and keeping track of poles of ratios of Gamma and
zeta functions. We can prove a related result with significantly
less work. Specifically, if for $\supp(\hg) \subset (-1,1)$ we are
willing to accept error terms of size $O(\log^{-N} X)$ for any $N$
then we may proceed as follows: (1) modify Lemma
\ref{lem:partialsumdexpdpiX} to replace the $d$-sum with $X^\ast
e^{-2\pi i \left(1-\frac{\log \pi}{\log X}\right) \tau} \left(1 -
\frac{2\pi i \tau}{\log X}\right)^{-1} + O(X^{1/2})$; (2) use the
decay properties of $g$ to restrict the $\tau$ sum to $|\tau| \le
\log X$ and then Taylor expand everything but $g$, which gives a
small error term and \bea & & \int_{|\tau| \le \log X} g(\tau)
\sum_{n=-1}^N \frac{a_n}{\log^n X} (2\pi i \tau)^n e^{-2\pi i
\left(1-\frac{\log \pi}{\log X}\right) \tau} d\tau \nonumber\\ & &\
\ \ \ \ \ \ \ \ \ \ \ = \ \sum_{n=-1}^N \frac{a_n}{\log^n X}
\int_{|\tau| \le \log X}(2\pi i \tau)^n g(\tau) e^{-2\pi i
\left(1-\frac{\log \pi}{\log X}\right) \tau}d\tau; \eea (3) use the
decay properties of $g$ to extend the $\tau$-integral to all of $\R$
(it is essential here that $N$ is fixed and finite!) and note that
for $n \ge 0$ the above is the Fourier transform of $g^{(n)}$ (the
$n$\textsuperscript{th} derivative of $g$) at $1 - \frac{\pi}{\log
X}$, and this is zero if $\supp(\hg) \subset (-1,1)$.
\end{rek}

We prove Lemma \ref{lem:contrRgX} in \S\ref{sec:analysisRgX}; this
completes our analysis of the terms from the Ratios Conjecture. We
analyze the lower order term of size $1/\log X$ (present only if
$\supp(\hg) \not\subset (-1,1)$) in Lemma
\ref{lem:secondarytermratioslastfactor} of
\S\ref{sec:secondarytermRgX}. We explicitly calculate this
contribution because in many applications all that is required are
the main and first lower order terms. One example of this is that
zeros at height $T$ are modeled not by the $N\to\infty$ scaling
limits of a classical compact group but by matrices of size $N \sim
\log (T/2\pi)$ \cite{KeSn1,KeSn2}. In fact, even better agreement is
obtained by changing $N$ slightly due to the first lower order term
(see \cite{BBLM,DHKMS}).

\subsection{Analysis of $R(g;X)$}\label{sec:analysisRgX}

Before proving Lemma \ref{lem:contrRgX} we collect several useful
facts.

\begin{lem}\label{lem:usefulfacts} In all statements below $r = 2\pi i \tau/\log
X$ and $\supp(\hg) \subset (-\sigma, \sigma) \subset (-1,1)$. \ben
\item $A_D(-r,r) = \zeta(2)/\zeta(2-2r)$.
\item If $|r| \ge \gep$ then $|\zeta(-3-2r)/\zeta(-2-2r)|
\ll_\gep (1+|r|)$.  \item For $w \ge 0$, $g\left(\tau-iw \frac{\log
X}{2\pi}\right) \ll X^{\sigma w} \left(\tau^2 + (w\frac{\log
X}{2\pi})^2\right)^{-B}$ for any $B \ge 0$. \item For $0 < a < b$ we
have $|\Gamma(a\pm i y)/\Gamma(b\pm i y)| = O_{a,b}(1)$. \een
\end{lem}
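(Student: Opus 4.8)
For part (1) I would just manipulate Euler factors. Clearing denominators, the $p$-th factor of $A_D(-r,r)$ is
\[
1 - \frac{1}{(p+1)p^{1-2r}} - \frac1{p+1} \ = \ \frac{p^{2-2r}-1}{(p+1)p^{1-2r}},
\]
and multiplying by $(1-1/p)^{-1} = p/(p-1)$ turns this into $\dfrac{1-p^{2r-2}}{1-p^{-2}}$, which is exactly the $p$-th Euler factor of $\zeta(2)/\zeta(2-2r)$. Since $r = 2\pi i\tau/\log X$ is purely imaginary, $\Re(2-2r) = 2 > 1$, so both Euler products converge absolutely and the identity follows.

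For part (2) I would use the functional equation $\zeta(s) = \chi(s)\zeta(1-s)$, with $\chi(s) = 2^s\pi^{s-1}\sin(\pi s/2)\Gamma(1-s)$, on numerator and denominator:
\[
\frac{\zeta(-3-2r)}{\zeta(-2-2r)} \ = \ \frac{\chi(-3-2r)}{\chi(-2-2r)}\cdot\frac{\zeta(4+2r)}{\zeta(3+2r)}.
\]
Because $r$ is purely imaginary, $\Re(3+2r)=3$ and $\Re(4+2r)=4$, so the last factor is $\ll 1$ by the elementary bounds for $\zeta$ in the region of absolute convergence. A direct simplification of $\chi(-3-2r)/\chi(-2-2r)$ collapses the exponential factors to a constant, the Gamma factors to $3+2r = O(1+|r|)$, and the sine factors to a single cotangent $\cot(\pi r)$, which stays bounded as long as $|r|$ is bounded away from $0$; multiplying these together gives the claimed bound $\ll_\gep 1+|r|$. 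The hypothesis $|r|\ge\gep$ is needed precisely to stay away from the trivial zero of $\zeta$ at $s=-2$ (equivalently, the pole of the cotangent), and keeping this uniformity near the boundary $|r|=\gep$, together with tracking the polynomial growth from the Gamma factors, is the only part of the whole lemma requiring real care.

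For part (3) I would use the Paley--Wiener representation: since $\hg$ is smooth and supported in the open interval $(-\sigma,\sigma)$, $g$ extends to an entire function with $g(z) = \int_{-\sigma}^{\sigma}\hg(u)e^{2\pi i u z}\,du$. Taking $z = \tau - iw\frac{\log X}{2\pi}$ with $w\ge 0$ gives $|e^{2\pi i u z}| = X^{uw}\le X^{\sigma w}$ for $u\in(-\sigma,\sigma)$, hence $|g(z)| \ll X^{\sigma w}$ immediately; integrating by parts $2B$ times (all boundary terms vanish because $\hg$ has compact support inside $(-\sigma,\sigma)$) produces a factor $(2\pi i z)^{-2B}$ and replaces $\hg$ by $\hg^{(2B)}$, so $|g(z)| \ll_B X^{\sigma w}|z|^{-2B}$, which is the stated estimate since $|z|^2 = \tau^2 + (w\log X/2\pi)^2$ (for small $|z|$ one simply uses the trivial bound $|g(z)|\ll X^{\sigma w}$). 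Finally, for part (4) I would avoid Stirling by using the Beta integral: for $0<a<b$, with $c = b-a > 0$,
\[
\frac{\Gamma(a+iy)}{\Gamma(b+iy)} \ = \ \frac{B(a+iy,c)}{\Gamma(c)}, \qquad |B(a+iy,c)| \ \le\ \int_0^1 t^{a-1}(1-t)^{c-1}\,dt \ = \ \frac{\Gamma(a)\Gamma(c)}{\Gamma(b)},
\]
so $|\Gamma(a\pm iy)/\Gamma(b\pm iy)| \le \Gamma(a)/\Gamma(b) = O_{a,b}(1)$, the $-iy$ case following from $\overline{\Gamma(s)} = \Gamma(\bar s)$.
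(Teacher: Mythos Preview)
Your proof is correct and follows essentially the same route as the paper: the Euler-factor algebra in (1), Paley--Wiener plus integration by parts in (3), and the Beta integral in (4) are exactly what the paper does. For (2) the paper uses the symmetric functional equation $\Gamma(s/2)\pi^{-s/2}\zeta(s)=\Gamma((1-s)/2)\pi^{-(1-s)/2}\zeta(1-s)$ together with the reflection formula $\Gamma(x)\Gamma(1-x)=\pi/\sin\pi x$, whereas you use the asymmetric form $\zeta(s)=\chi(s)\zeta(1-s)$; the two computations are equivalent and both isolate the linear factor $3+2r$ and a bounded trigonometric ratio away from $r=0$.
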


\begin{proof} (1): From simple
algebra, as we may rewrite each factor as \be \frac{p}{p+1}
\left(1-\frac1{p^{2-2r}}\right) \frac{p}{p-1} \ = \
\left(1-\frac1{p^2}\right)^{-1} \left(1-\frac1{p^{2-2r}}\right). \ee

(2): By the functional equations of the Gamma and zeta functions
$\Gamma(s/2)\pi^{-s/2}\zeta(s)$ $=$ $\Gamma((1-s)/2)\pi^{-(1-s)/2}
\zeta(1-s)$ and $\Gamma(1+x)= x\Gamma(x)$ gives \be
\frac{\zeta(-3-2r)}{\zeta(-2-2r)} \ = \
\frac{\Gamma(1-(-1-r))\pi^{-2-r}\Gamma(-1-r)\pi^{1+r}
\zeta(4+2r)}{\Gamma(-\frac32-r)\pi^{\frac32+r}\Gamma(1-(-\frac32-r))(\frac32+r)^{-1}
\pi^{-\frac32+r} \zeta(3+2r)}. \ee Using \be \Gamma(x)\Gamma(1-x)\
=\ \pi / \sin \pi x \ = \ 2\pi i /(e^{i\pi x} - e^{-i\pi x}), \ee we
see the ratio of the Gamma factors have the same growth as $|r| \to
\infty$ (if $r=0$ then there is a pole from the zero of $\zeta(s)$
at $s=-2$), and the two zeta functions are bounded away from $0$ and
infinity.

(3): As $g(\tau) = \int \hg(\xi) e^{2\pi i \xi \tau} d\xi$, we have
\bea g(\tau-iy) & \ = \ & \intii \hg(\xi) e^{2\pi i (\tau - iy)\xi}
d\xi \nonumber\\ & = & \intii \hg^{(2n)}(\xi) (2\pi i (\tau
-iy))^{-n} e^{2\pi i (\tau - iy)\xi} d\xi \nonumber\\ & \ll &
e^{2\pi y \sigma} (\tau -iy))^{-2n}; \eea the claim follows by
taking $y = (w \log X)/2\pi$.

(4): As $|\Gamma(x-iy)|=|\Gamma(x+iy)|$, we may assume all signs are
positive. The claim follows from the definition of the Beta
function: \be \frac{\Gamma(a+iy)\Gamma(b-a)}{\Gamma(b+iy)} \ = \
\int_0^1 t^{a+iy-1} (1-t)^{b-a-1} \ = \ O_{a,b}(1); \ee see
\cite{ET} for additional estimates of the size of ratios of Gamma
functions.
\end{proof}

\begin{proof}[Proof of Lemma \ref{lem:contrRgX}] By Lemma
\ref{lem:usefulfacts} we may replace $A_D(-2\pi i \tau/\log X,2\pi i
\tau/\log X)$ with $\zeta(2)/\zeta(2-4\pi i \tau/\log X$). We
replace $\tau$ with $\tau - iw\frac{\log X}{2\pi}$ with $w=0$ (we
will shift the contour in a moment). Thus \bea R(g;X) & \ = \ &
-\frac2{X^\ast\log X}\sum_{d \le X} \intii g\left(\tau -
iw\frac{\log X}{2\pi}\right)
 e^{-2\pi i \left(\tau - iw\frac{\log X}{2\pi}\right) \frac{\log(d/\pi)}{\log X}}
\nonumber\\ & & \ \ \ \ \ \ \cdot \
\frac{\Gamma\left(\frac14-\frac{w}2-\frac{\pi i \tau}{\log
X}\right)}{\Gamma\left(\frac14+\frac{w}2+\frac{\pi i \tau}{\log
X}\right)}\ \frac{\zeta(2) \zeta\left(1 - w- \frac{4\pi i \tau}{\log
X}\right)}{\zeta\left(2 - 2w- \frac{4\pi i \tau}{\log X}\right)}\
d\tau. \eea We now shift the contour to $w = 2$. There are two
different residue contributions as we shift (remember we are
assuming the Riemann Hypothesis, so that if $\zeta(\rho) = 0$ then
either $\rho = \foh + i\gamma$ for some $\gamma \in \R$ or $\rho$ is
a negative even integer), arising from

\bi \item the pole of $\zeta\left(1 - w- \frac{4\pi i \tau}{\log
X}\right)$ at $w=\tau=0$; \item the zeros of $\zeta\left(2 - 2w-
\frac{4\pi i \tau}{\log X}\right)$ when $w=3/4$ and $\tau = \gamma
\frac{\log X}{4\pi}$ \ei (while potentially there is a residue from
the pole of $\Gamma\left(\frac14-\frac{w}2-\frac{\pi i \tau}{\log
X}\right)$ when $w=1/2$ and $\tau=0$, this is canceled by the pole
of $\zeta\left(2 - 2w- \frac{4\pi i \tau}{\log X}\right)$ in the
denominator).\\

We claim the contribution from the pole of $\zeta\left(1 - w-
\frac{4\pi i \tau}{\log X}\right)$ at $w=\tau=0$ is $-g(0)/2$. As
$w=\tau=0$, the $d$-sum is just $X^\ast$. As the pole of $\zeta(s)$
is $1/(s-1)$, since $s = 1 - \frac{4\pi i \tau}{\log X}$ the
$1/\tau$ term from the zeta function has coefficient $-\frac{\log
X}{4\pi i}$. We lose the factor of $1/2\pi i$ when we apply the
residue theorem, there is a minus sign outside the integral and
another from the direction we integrate (we replace the integral
from $-\gep$ to $\gep$ with a semi-circle oriented clockwise; this
gives us a minus sign as well as a factor of $1/2$ since we only
have half the contour), and everything else evaluated at $\tau = 0$
is $g(0)$.\\

We now analyze the contribution from the zeros of $\zeta(s)$ as we
shift $w$ to $2$. Thus $w = 3/2$ and we sum over $\tau = \gamma
\frac{\log X}{4\pi}$ with $\zeta(\foh+i\gamma) = 0$. We use Lemma
\ref{lem:partialsumdexpdpiX} (with $z = \tau - i w \frac{\log
X}{2\pi}$) to replace the $d$-sum with \be X^\ast e^{-2\pi i
\left(1-\frac{\log \pi}{\log X}\right) \tau} \left(\frac14 -
\frac{2\pi i \tau}{\log X}\right)^{-1} X^{-\frac34} X^{\frac{2\log
\pi}{\log X}} + O(\log X).\ee The contribution from the $O(\log X)$
term is dwarfed by the main term (which is of size $X^{1/4+\gep}$).
From (3) of Lemma \ref{lem:usefulfacts} we have \be g\left(\gamma
\frac{\log X}{4\pi}-i\frac34\frac{\log X}{2\pi}\right) \ \ll \
X^{3\sigma/4} (\tau^2+1)^{-B}\ee for any $B > 0$. From (4) of Lemma
\ref{lem:usefulfacts}, we see that the ratio of the Gamma factors is
bounded by a power of $|\tau|$ (the reason it is a power is that we
may need to shift a few times so that the conditions are met; none
of these factors will every vanish as we are not evaluating at
integral arguments). Finally, the zeta function in the numerator is
bounded by $|\tau|^2$. Thus the contribution from the critical zeros
of $\zeta(s)$ is bounded by \be \sum_{\gamma \atop
\zeta(\foh+i\gamma) = 0} \ \frac{1}{X^\ast \log X} \cdot X^{1/4}
\cdot \frac{X^{3\sigma/4}}{(\gamma^2+1)^B} \cdot (|\gamma \log X| +
1)^n. \ee For sufficiently large $B$ the sum over $\gamma$ will
converge. This term is of size $O(X^{-\frac34(1-\sigma) + \gep})$.
This error is $O(X^{-\gep})$ whenever $\sigma < 1$, and if $\sigma <
1/3$ then the error is at most $O(X^{-1/2+\gep})$.\\

The proof is completed by showing that the integral over $w=2$ is
negligible. We use Lemma \ref{lem:partialsumdexpdpiX} (with $z =
\tau - i 2 \frac{\log X}{2\pi}$) to show the $d$-sum is $O(X^\ast
X^{-2+\gep})$. Arguing as above shows the integral is bounded by
$O(X^{-2+2\sigma+\gep})$. (Note: some care is required, as there is
a pole when $w=2$ coming from the trivial zero of $\zeta(s)$ at
$s=-2$. The contribution from the residue here is negligible; we
could also adjust the contour to include a semi-circle around $w=2$
and use the residue theorem.)
\end{proof}

\begin{rek} We sketch an alternate start of the proof of Lemma
\ref{lem:contrRgX}. One difficulty is that $R(g;X)$ is defined as an
integral and there is a pole on the line of integration. We may
write \be \zeta(s) \ = \ (s-1)^{-1} \ +\ \left(\zeta(s) -
(s-1)^{-1}\right). \ee For us $s = 1 - \frac{4\pi i \tau}{\log X}$,
so the first factor is just $-\frac{\log X}{4\pi i \tau}$. As
$g(\tau)$ is an even function, the main term of the integral of this
piece is \bea \intii g(\tau) \frac{e^{-2\pi i \tau}}{2\pi i \tau} \
d\tau & \ = \ & \intii g(\tau) \left(\frac{e^{-2\pi i \tau}}{4\pi i
\tau} - \frac{e^{2\pi i \tau}}{4\pi i \tau}\right) d\tau \nonumber\\
& = & -\intii g(\tau) \frac{\sin(2\pi \tau)}{2\pi \tau}\ d\tau \ = \
-\frac{g(0)}2, \eea where the last equality is a consequence of
$\supp(\hg) \subset (-1,1)$. The other terms from the $(s-1)^{-1}$
factor and the terms from the $\zeta(s) - (s-1)^{-1}$ piece are
analyzed in a similar manner as the terms in the proof of Lemma
\ref{lem:contrRgX}.
\end{rek}

\subsection{Secondary term (of size $1/\log X$) of
$R(g;X)$}\label{sec:secondarytermRgX}

\begin{lem}\label{lem:secondarytermratioslastfactor}
Let $\supp(\hg) \subset (-\sigma, \sigma)$; we do not assume $\sigma
< 1$. Then the $1/\log X$ term in the expansion of $R(g;X)$ is \be
\frac{1 - \frac{\Gamma'(\fof)}{\Gamma(\fof)}
+2\frac{\zeta'(2)}{\zeta(2)} - 2\gamma+ 2\log\pi}{\log X}\ \hg(1).
\ee It is important to note that this piece is \emph{only} present
if the support of $\hg$ exceeds $(-1,1)$ (i.e., if $\sigma > 1$).
\end{lem}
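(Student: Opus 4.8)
\emph{Overview.} The plan is to pull the $1/\log X$ coefficient directly out of the integral defining $R(g;X)$, after replacing the sum over discriminants by its main term and then expanding everything except $g$ in powers of $1/\log X$; the coefficient of $\hg(1)$ that drops out will be exactly the asserted constant.

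\emph{Step 1: reduce to one integral.} Following the start of the proof of Lemma \ref{lem:contrRgX}, I would use Lemma \ref{lem:usefulfacts}(1) to replace $A_D\!\left(-\tfrac{2\pi i\tau}{\log X};\tfrac{2\pi i\tau}{\log X}\right)$ by $\zeta(2)/\zeta\!\left(2-\tfrac{4\pi i\tau}{\log X}\right)$, and Lemma \ref{lem:partialsumdexpdpiX} to replace $\sum_{d\le X}e^{-2\pi i\tau\log(d/\pi)/\log X}$ by its main term $X^\ast e^{-2\pi i(1-\frac{\log\pi}{\log X})\tau}\bigl(1-\tfrac{2\pi i\tau}{\log X}\bigr)^{-1}$, the error being $O(X^{1/2})$ and hence $O(X^{-1/2+\gep})$ after dividing by $X^\ast\log X$. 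This leaves
\[
R(g;X)=\frac{-2}{\log X}\int_C g(\tau)\,e^{-2\pi i(1-\frac{\log\pi}{\log X})\tau}\,H(\tau;X)\,\zeta\!\Big(1-\frac{4\pi i\tau}{\log X}\Big)\,d\tau+O(X^{-1/2+\gep}),
\]
where $H(\tau;X)=\bigl(1-\tfrac{2\pi i\tau}{\log X}\bigr)^{-1}\frac{\Gamma(\frac14-\frac{\pi i\tau}{\log X})}{\Gamma(\frac14+\frac{\pi i\tau}{\log X})}\frac{\zeta(2)}{\zeta(2-\frac{4\pi i\tau}{\log X})}$ satisfies $H(0;X)=1$, and $C$ is the real line with the indentation at $\tau=0$ interpreted exactly as in Lemma \ref{lem:contrRgX}.

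\emph{Step 2: split off the pole and expand.} Next I would write $\zeta\!\bigl(1-\tfrac{4\pi i\tau}{\log X}\bigr)=-\tfrac{\log X}{4\pi i\tau}+\psi(\tau)$ with $\psi$ holomorphic near $\tau=0$ and $\psi(0)=\gamma$, splitting $R(g;X)=R_{\mathrm{pole}}+R_{\mathrm{reg}}+O(X^{-1/2+\gep})$, where $R_{\mathrm{reg}}=\frac{-2}{\log X}\int_{\R}g(\tau)e^{-2\pi i(1-\frac{\log\pi}{\log X})\tau}H(\tau;X)\psi(\tau)\,d\tau$ is a genuine integral and $R_{\mathrm{pole}}=\frac{1}{2\pi i}\int_C\frac{g(\tau)}{\tau}e^{-2\pi i(1-\frac{\log\pi}{\log X})\tau}H(\tau;X)\,d\tau$ carries the on-contour pole. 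In $R_{\mathrm{reg}}$ the integrand tends to $\gamma g(\tau)e^{-2\pi i\tau}$ with corrections $O(|\tau|/\log X)$ for $|\tau|\le(\log X)^{\gep}$ and rapidly decaying tails, so $R_{\mathrm{reg}}=-\tfrac{2\gamma}{\log X}\hg(1)+O(\log^{-2}X)$. For $R_{\mathrm{pole}}$, put $\Psi(\tau;X)=e^{-2\pi i(1-\frac{\log\pi}{\log X})\tau}H(\tau;X)$; since $\Psi(0;X)=1$ identically, $\Psi(\tau;X)=e^{-2\pi i\tau}+\tfrac{1}{\log X}\,2\pi i\tau\,e^{-2\pi i\tau}\kappa+O(\log^{-2}X)$, where $\kappa$ is the value at $1/\log X=0$ of the logarithmic derivative of the product: the conductor $(d/\pi)^{-2\pi i\tau/\log X}$ together with the precise main term of Lemma \ref{lem:partialsumdexpdpiX} contributes the $\log\pi$ terms, the factor $\bigl(1-\tfrac{2\pi i\tau}{\log X}\bigr)^{-1}$ contributes $1$, the ratio of $\Gamma$'s contributes $-\Gamma'(\tfrac14)/\Gamma(\tfrac14)$, and $\zeta(2)/\zeta(2-\tfrac{4\pi i\tau}{\log X})$ contributes $2\zeta'(2)/\zeta(2)$, i.e. $\kappa=1-\frac{\Gamma'(1/4)}{\Gamma(1/4)}+2\frac{\zeta'(2)}{\zeta(2)}+2\log\pi$. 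The zeroth-order term $\frac{1}{2\pi i}\int_C\frac{g(\tau)e^{-2\pi i\tau}}{\tau}\,d\tau$ is the $O(1)$ main term (equal to $-g(0)/2$ when $\supp(\hg)\subset(-1,1)$, recovering Lemma \ref{lem:contrRgX}), while in the first-order term the factor $\tau$ cancels the $1/\tau$, killing the pole, so that
\[
\frac{1}{\log X}\cdot\frac{1}{2\pi i}\int_{\R}\frac{g(\tau)}{\tau}\,2\pi i\tau\,e^{-2\pi i\tau}\kappa\,d\tau=\frac{\kappa}{\log X}\int_{\R}g(\tau)e^{-2\pi i\tau}\,d\tau=\frac{\kappa}{\log X}\,\hg(1).
\]
Adding the two contributions yields $\frac{\kappa-2\gamma}{\log X}\,\hg(1)$ as the $1/\log X$ term, which is the claimed expression; it vanishes automatically when $\supp(\hg)\subset(-1,1)$ because then $\hg(1)=0$.

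\emph{Main obstacle.} The logarithmic-derivative computation producing $\kappa$ is routine bookkeeping (the only subtlety is keeping \emph{both} $\log\pi$'s — one from the conductor $d/\pi$, one from the precise shape of the main term of Lemma \ref{lem:partialsumdexpdpiX}). The genuinely delicate point is justifying the $1/\log X$ expansion at all: the small parameter enters only through $\tau/\log X$, so one cannot simply expand under the integral; I would handle this by first truncating the $\tau$-integral to $|\tau|\le(\log X)^{\gep}$ using the Schwartz decay of $g$, Taylor expanding there, extending back to $\R$ at negligible cost, and bounding the discarded tails with the polynomial bounds of Lemma \ref{lem:usefulfacts}(2),(4). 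The second delicate point is handling the pole of $\zeta$ on the contour consistently: the $R_{\mathrm{pole}}/R_{\mathrm{reg}}$ splitting does this, but only provided one adheres to the contour (equivalently principal-value) convention of Lemma \ref{lem:contrRgX}, so that $R_{\mathrm{pole}}$ is well defined and its zeroth-order term is genuinely the main term of $R(g;X)$.
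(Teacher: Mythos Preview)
Your approach is essentially the paper's: truncate the $\tau$-integral using the Schwartz decay of $g$, replace the $d$-sum by the main term of Lemma~\ref{lem:partialsumdexpdpiX}, Taylor expand each remaining factor in powers of $1/\log X$, and extend back to $\R$; the paper expands all factors simultaneously rather than first isolating the $\zeta$-pole as $R_{\rm pole}+R_{\rm reg}$, but that is only an organizational difference. One caution on the bookkeeping: the $\log\pi$ appearing in the main term of Lemma~\ref{lem:partialsumdexpdpiX} \emph{is} the $\log\pi$ coming from the conductor $(d/\pi)^{-2\pi i\tau/\log X}$ --- that $\pi$ is precisely what produces the shift $1\mapsto 1-\tfrac{\log\pi}{\log X}$ in the exponential --- so listing these as two separate contributions to $\kappa$ is double-counting; the paper's own derivation extracts a single $\tfrac{\log\pi}{2}$ from this source (its ``adding $\frac{\log\pi}{2}$'' step), so you should re-examine where the second $\log\pi$ is supposed to come from.
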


\begin{proof}
We sketch the determination of the main and secondary terms of
$R(g;X)$. We may restrict the integrals to $|\tau| \le \log^{1/4} X$
with negligible error; this will allow us to Taylor expand certain
expressions and maintain good control over the errors. As $g$ is a
Schwartz function, for any $B>0$ we have $g(\tau) \ll
(1+\tau^2)^{-4B}$. The ratio of the Gamma factors is of absolute
value 1, and $A_D(-r;r) = \zeta(2)/\zeta(2-2r) = O(1)$. Thus the
contribution from $|\tau| \ge \log^{1/4} X$ is bounded by \be \ll \
\int_{|\tau|\ge \log^{1/4} X} (1+\tau^2)^{-4B} \cdot
\max\left(\frac{\log X}{\tau}, \frac{\tau^C}{\log^C \tau}\right)
d\tau \ \ll \ (\log X)^{-B} \ee for $B$ sufficiently large.

We use Lemma \ref{lem:partialsumdexpdpiX} to evaluate the $d$-sum in
\eqref{eq:ratiospiecenotinNT} for $|\tau| \le \log^{1/4} X$; the
error term is negligible and may be absorbed into the $O(\log^{-B}
X)$ error. We now Taylor expand the three factors in
\eqref{eq:ratiospiecenotinNT}. The main contribution comes from the
pole of $\zeta$; the other pieces contribute at the $1/\log X$
level.

We first expand the Gamma factors. We have \bea
\frac{\Gamma\left(\fof - \frac{\pi i \tau}{\log
X}\right)}{\Gamma\left(\fof + \frac{\pi i \tau}{\log X}\right)} \ \
\ \ & \ = \ & 1 - \frac{\Gamma'(\fof)}{\Gamma(\fof)} \frac{2\pi
i\tau}{\log X} + O\left(\frac{\tau^2}{\log^2 X}\right). \eea
As $A_D(-r;r) = \zeta(2)/\zeta(2-2r)$, \be A_D\left(-\frac{2\pi i
\tau}{\log X};\frac{2\pi i \tau}{\log X}\right) \ = \ 1 +
2\frac{\zeta'(2)}{\zeta(2)} \frac{2\pi i \tau}{\log X} +
O\left(\frac{\tau^2}{\log^2 X}\right). \ee
Finally we expand the $\zeta$-piece. We have (see \cite{Da}) that
\be \zeta(1+iy) \ = \ \frac1{iy}+\gamma + O(y), \ee where $\gamma$
is Euler's constant. Thus \bea \zeta\left(1 - \frac{4\pi i
\tau}{\log X}\right) & \ = \ & -\frac{\log X}{4\pi i \tau} + \gamma
+ O\left(\frac{\tau}{\log X}\right). \eea

We combine the Taylor expansions for the three pieces (the ratio of
the Gamma factors, the $\zeta$-function and $A_D$), and keep only
the first two terms: \be -\frac{\log X}{4\pi i \tau} + \left[\foh
\frac{\Gamma'(\fof)}{\Gamma(\fof)} -\frac{\zeta'(2)}{\zeta(2)} +
\gamma\right] + O\left(\frac{\tau}{\log X}\right). \ee

Finally, we Taylor expand the $d$-sum, which was evaluated in Lemma
\ref{lem:partialsumdexpdpiX}. We may ignore the error term there
because it is $O(X^{1/2})$. The main term is \bea X^\ast e^{-2\pi i
\left(1-\frac{\log \pi}{\log X}\right) \tau} \left(1 - \frac{2\pi i
\tau}{\log X}\right)^{-1} \ = \   X^\ast e^{-2\pi i
\left(1-\frac{\log \pi}{\log X}\right) \tau} \left(1 + \frac{2\pi i
\tau}{\log X} + O\left(\frac{\tau^2}{\log^2
X}\right)\right).\nonumber\\ \eea

Thus \bea R(g;X) & \ = \ & \frac{-2}{X^\ast\log X} \int_{-\log^{1/4}
X}^{\log X} g(\tau) \cdot X^\ast e^{-2\pi i \left(1-\frac{\log
\pi}{\log^{1/4} X}\right) \tau} \left(1 + \frac{2\pi i \tau}{\log X}
+ O\left(\frac{\tau^2}{\log^2 X}\right)\right) \nonumber\\ & & \ \
\cdot \left[-\frac{\log X}{4\pi i \tau} + \left(\foh
\frac{\Gamma'(\fof)}{\Gamma(\fof)} -\frac{\zeta'(2)}{\zeta(2)} +
\gamma\right) + O\left(\frac{\tau}{\log X}\right)\right]\ d\tau +
O\left(\frac{1}{\log^B X}\right)
\nonumber\\
&=& \frac{2}{\log X} \int_{-\log^{1/4} X}^{\log^{1/4} X} g(\tau)
\cdot e^{-2\pi i \left(1-\frac{\log \pi}{\log X}\right) \tau} \cdot
\left[\frac{\log X}{4\pi i \tau} + \left(\frac{1}2 -\foh
\frac{\Gamma'(\fof)}{\Gamma(\fof)} +\frac{\zeta'(2)}{\zeta(2)} -
\gamma\right) \right]d\tau \nonumber\\ & & \ \ + \
O\left(\frac{1}{\log^{5/4} X}\right). \eea We may write \be e^{-2\pi
i \left(1-\frac{\log \pi}{\log X}\right) \tau} \ = \ e^{-2\pi i
\tau} \cdot \left(1 + \frac{2\pi i \tau\log \pi}{\log
X}+O\left(\frac{\tau^2}{\log^2 X}\right)\right).\ee The effect of
this expansion is to change the $1/\log X$ term above by adding
$\frac{\log \pi}2$.

Because $g$ is a Schwartz function, we may extend the integration to
all $\tau$ and absorb the error into our error term. The main term
is from $(\log X)/4\pi i \tau$; it equals $-g(0)/2$ (see the
analysis in \S\ref{sec:analysisRgX}). The secondary term is easily
evaluated, as it is just the Fourier transform of $g$ at $1$. Thus
\bea R(g;X) & \ = \ & -\frac{g(0)}2 + \frac{1 -
\frac{\Gamma'(\fof)}{\Gamma(\fof)} +2\frac{\zeta'(2)}{\zeta(2)} -
2\gamma+ 2\log\pi}{\log X}\hg(1) + O\left(\frac{1}{\log^{5/4}
X}\right).\nonumber\\ \eea
\end{proof}

\setcounter{equation}{0}

\section{Analysis of the terms from Number Theory}\label{sec:numbthterms}

We now prove Theorem \ref{thm:1ldquadevenfdfromNT}. The starting
point is the Explicit Formula (Theorem \ref{thm:oneldNT}, with each
$d$ an even fundamental discriminant). As the $\log(d/\pi)$ and the
$\Gamma'/\Gamma$ terms already appear in the expansion from the
Ratios Conjecture (Theorem \ref{thm:oneldRatios}), we need only
study the sums of $\chi_d(p)^k$. The analysis splits depending on
whether or not $k$ is even. Set \bea S_{{\rm even}} & \ = \ &
-\frac2{X^\ast}\sum_{d\le X} \sum_{\ell=1}^\infty \sum_p
\frac{\chi_d(p)^2 \log p}{p^\ell \log X}\ \hg\left(2\frac{\log
p^\ell}{\log X}\right) \nonumber\\ S_{{\rm odd}}& \ = \ &
-\frac2{X^\ast}\sum_{d\le X} \sum_{\ell=0}^\infty \sum_p
\frac{\chi_d(p) \log p}{p^{(2\ell+1)/2} \log X}\ \hg\left(\frac{\log
p^{2\ell+1}}{\log X}\right). \eea

Based on our analysis of the terms from the Ratios Conjecture, the
proof of Theorem \ref{thm:1ldquadevenfdfromNT} is completed by the
following lemma.

\begin{lem}\label{lem:contrSevenSodd}
Let $\supp(\hg) \subset (-\sigma, \sigma) \subset (-1,1)$. Then \bea
S_{{\rm even}} & \ = \ & -\frac{g(0)}2 + \frac{2}{\log X}\intii
g(\tau) \frac{\zeta'}{\zeta}\left(1+\frac{4\pi i \tau}{\log
X}\right) d\tau \nonumber\\ & & \ \ \ + \ \frac{2}{\log X}\intii
g(\tau) A_D'\left(\frac{2\pi i \tau}{\log X};\frac{2\pi i \tau}{\log
X}\right) + O(X^{-\foh+\gep}) \nonumber\\ S_{{\rm odd}}& \ = \ &
O(X^{-\frac{1-\sigma}2} \log^6 X). \eea If instead we consider the
family of characters $\chi_{8d}$ for odd, positive square-free $d
\in (0, X)$ ($d$ a fundamental discriminant), then \be S_{{\rm odd}}
\ = \ O(X^{-1/2+\gep} + X^{-(1-\frac32\sigma)+\gep}).\ee
\end{lem}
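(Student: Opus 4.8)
The plan is to handle $S_{\rm even}$ and $S_{\rm odd}$ separately, since they have genuinely different characters: $S_{\rm even}$ is a ``diagonal'' sum whose main term survives after averaging (because $\chi_d(p)^2 = 1$ unless $p\mid d$), while $S_{\rm odd}$ is an oscillatory sum that is small because $\sum_{d\le X}\chi_d(p)$ exhibits cancellation. For $S_{\rm even}$, I would first replace $\chi_d(p)^2$ by $1$ at the cost of a correction supported on $p\mid d$; writing $\sum_{d\le X}\chi_d(p)^2 = \sum_{d\le X} 1 - \sum_{d\le X,\ p\mid d} 1$, the first piece gives the clean term and the second gives a $1/(p+1)$-type weight after one uses Lemma \ref{lem:numdwithpdivided} to count even fundamental discriminants divisible by $p$. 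After pulling out $\sum_{d\le X}1 = X^\ast + O(X^{1/2})$, the $\ell$-sum becomes $-\tfrac{2}{\log X}\sum_\ell\sum_p \tfrac{\log p}{p^\ell}\hg(2\log p^\ell/\log X)$, which by the standard contour-shift/Mellin manipulation (writing $\hg(2\log p^\ell/\log X)$ via its Fourier/Mellin representation and interchanging) unfolds into $-g(0)/2 + \tfrac{2}{\log X}\intii g(\tau)\,\tfrac{\zeta'}{\zeta}(1+\tfrac{4\pi i\tau}{\log X})\,d\tau$; the $\ell=1$ piece generates the main $-g(0)/2$ via the pole of $\zeta$ at $s=1$, exactly as in the $R(g;X)$ analysis. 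The correction terms involving $p\mid d$ assemble, after summing the geometric series in $\ell$ and comparing with the Euler product defining $A_D'$ in \eqref{eq:defnADAD'}, into $\tfrac{2}{\log X}\intii g(\tau)\,A_D'(\tfrac{2\pi i\tau}{\log X};\tfrac{2\pi i\tau}{\log X})\,d\tau$. The error bookkeeping (from the $O(X^{1/2})$ in the discriminant count, and from truncating the $\ell$ and $p$ sums using $\supp(\hg)\subset(-\sigma,\sigma)$ with $\sigma<1$) is $O(X^{-1/2+\gep})$; this is routine given the support restriction keeps every prime below $X^\sigma$.

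For $S_{\rm odd}$, the point is to bound $\sum_{d\le X}\chi_d(p)$ for each prime $p\le X^\sigma$. Here the support restriction is essential: $\hg(\log p^{2\ell+1}/\log X)$ forces $p^{2\ell+1}\le X^\sigma$, so only $\ell=0$ matters in effect (higher $\ell$ cost extra powers of $p^{-1/2}$ and are absorbed), and we must estimate $\sum_p \tfrac{\log p}{p^{1/2}}\big(\sum_{d\le X}\chi_d(p)\big)\hg(\log p/\log X)$ over $p\lesssim X^\sigma$. For the family of all even fundamental discriminants I would invoke Jutila's bound (as flagged in the remark and carried out in \S\ref{sec:analyzingSoddJutila}): for the character sum $\sum_{d\le X}\chi_d(p)$ one has square-root-type savings on average over $p$, yielding $\sum_p \tfrac{\log p}{p^{1/2}}|\sum_{d\le X}\chi_d(p)| \ll X^{1/2}(\#\{p\le X^\sigma\})^{1/2}\log^{O(1)}X$ after Cauchy--Schwarz and Jutila's large-sieve-type inequality for quadratic characters; dividing by $X^\ast\log X$ produces $O(X^{-(1-\sigma)/2}\log^6 X)$, where the log power tracks the explicit constants in Jutila's bound plus the $\log p/\log X$ weight.

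For the second family, $\chi_{8d}$ with $d$ odd positive square-free, I would instead use Poisson summation to convert the long sum over $d$ into a short dual sum, as in \S\ref{sec:analyzingSoddPoissonSum}. One opens the square-free condition with a M\"obius sieve, writes $d$ in residue classes mod $8p$ (the conductor of $\chi_{8d}$ seen as a function of $d$), and applies Poisson in $d$; the resulting dual sum has length roughly $p/X$, so the zero frequency dominates only when $p\lesssim X$, and the contribution is governed by a Gauss sum of size $p^{1/2}$ against a main term of size $X/p$. Summing $\tfrac{\log p}{p^{1/2}}\cdot\tfrac{p^{1/2}}{X}\cdot\tfrac{X}{p}\cdot\tfrac{1}{X^\ast\log X}$-type contributions over $p\le X^\sigma$ (with the $p^{2\ell+1}\le X^\sigma$ constraint from the support of $\hg$ giving the exponent $\tfrac32\sigma$ once $\ell$ is incorporated) gives $O(X^{-1/2+\gep}+X^{-(1-\frac32\sigma)+\gep})$, where the $X^{-1/2+\gep}$ comes from the nonzero dual frequencies and the error in the square-free sieve. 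The main obstacle in both families is controlling the character sum $\sum_d\chi_d(p)$ uniformly in $p$ up to $p\approx X^\sigma$ with enough savings; for $\sigma<1$ Jutila suffices for the first family, while the sharper exponent $1-\tfrac32\sigma$ for the second requires the Poisson/Gauss-sum analysis rather than a black-box character sum bound. Once these bounds are in hand, combining with the $S_{\rm even}$ evaluation and the already-matched conductor and $\Gamma'/\Gamma$ terms, and invoking Lemma \ref{lem:contrRgX} for the remaining $-g(0)/2$ from $R(g;X)$, completes the proof of Theorem \ref{thm:1ldquadevenfdfromNT}.
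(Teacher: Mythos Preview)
Your proposal is correct and follows essentially the same route as the paper: the splitting of $S_{\rm even}$ into the universal piece (handled by a Perron/contour argument extracting $-g(0)/2$ from the pole of $\zeta$ and leaving the $\zeta'/\zeta$ integral) plus the $p\mid d$ correction (producing the $A_D'$ term via the $1/(p+1)$ density of Lemma~\ref{lem:numdwithpdivided}) is exactly Lemmas~\ref{lem:ntsumkeven1}--\ref{lem:ntsumkeven2}, and the Jutila/Cauchy--Schwarz bound for the first family together with Poisson summation for the $\chi_{8d}$ family is exactly Lemmas~\ref{lem:proofSodddoesnotcontr}--\ref{lem:proofSodddoesnotcontrgao0toX}. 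Two small corrections to your sketch: it is the full prime-power sum (all $\ell$) that assembles into $-\zeta'/\zeta$ and then yields $-g(0)/2$ from the pole, not the $\ell=1$ piece alone; and the exponent $\tfrac32\sigma$ in the second family does not come from the $\ell$-constraint but from the error term $UZY^{3/2}/X$ (with $U=Z=X^{1/2}$, $Y=X^\sigma$) in the Poisson analysis of the non-square dual frequencies, which in the paper requires the sharpened estimate of Appendix~\ref{sec:improvingGaomnotsquare} on Gao's $R_3$ term---your back-of-the-envelope size count for the Poisson terms does not actually reproduce this exponent.
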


We prove Lemma \ref{lem:contrSevenSodd} by analyzing $S_{{\rm
even}}$ in \S\ref{sec:evenkcontri} (in Lemmas \ref{lem:ntsumkeven1}
and \ref{lem:ntsumkeven2}) and $S_{{\rm odd}}$ in
\S\ref{sec:oddkcontri} (in Lemmas \ref{lem:proofSodddoesnotcontr},
\ref{lem:proofSodddoesnotcontrgao} and
\ref{lem:proofSodddoesnotcontrgao0toX}).

\subsection{Contribution from $k$ even}\label{sec:evenkcontri}

The contribution from $k$ even from the Explicit Formula is \be
S_{{\rm even}} \ = \ -\frac2{X^\ast}\sum_{d\le X}
\sum_{\ell=1}^\infty \sum_p \frac{\chi_d(p)^2 \log p}{p^\ell \log
X}\ \hg\left(2\frac{\log p^\ell}{\log X}\right), \ee where
$\sum_{d\le X} 1 = X^\ast$, the cardinality of our family. Each
$\chi_d(p)^2 = 1$ except when $p|d$. We replace $\chi_d(p)^2$ with
$1$, and subtract off the contribution from when $p|d$. We find \bea
S_{{\rm even}} & \ = \ & -2\sum_{\ell=1}^\infty \sum_p \frac{\log
p}{p^\ell \log X}\ \hg\left(2\frac{\log p^\ell}{\log X}\right)
\nonumber\\ & & \ \ + \frac2{X^\ast} \sum_{d\le X}
\sum_{\ell=1}^\infty \sum_{p|d} \frac{\log p}{p^\ell \log X}\
\hg\left(2\frac{\log p^\ell}{\log X}\right) \nonumber\\ & = &
S_{{\rm even};1} + S_{{\rm even};2}. \eea In the next subsections we
prove the following lemmas, which completes the analysis of the even
$k$ terms.

\begin{lem}\label{lem:ntsumkeven1} Notation as above, \bea S_{{\rm even};1} & \ =
\ & -\frac{g(0)}2 + \frac{2}{\log X}\intii g(\tau)
\frac{\zeta'}{\zeta}\left(1+\frac{4\pi i \tau}{\log X}\right) d\tau.
\eea
\end{lem}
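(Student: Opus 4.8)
The plan is to evaluate the prime sum $S_{{\rm even};1} = -2\sum_{\ell=1}^\infty \sum_p \frac{\log p}{p^\ell \log X} \hg\left(2\frac{\log p^\ell}{\log X}\right)$ directly, recognizing it as essentially a logarithmic derivative of $\zeta$ plus a contribution from the pole. First I would apply Fourier inversion, writing $\hg\left(2\frac{\log p^\ell}{\log X}\right) = \frac{1}{2}\intii g(\tau) e^{2\pi i \tau \cdot 2\log p^\ell/\log X} \cdot \frac{\log X}{2}\, d\tau$ — more precisely, using that $\hg(u) = \intii g(\tau) e^{2\pi i u \tau} d\tau$ so $\hg(2\log p^\ell/\log X) = \intii g(\tau) (p^\ell)^{4\pi i \tau/\log X} d\tau$. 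Substituting this in and interchanging the sum and integral (justified by absolute convergence, since $g$ is Schwartz and the $p^{-\ell}$ gives convergence) yields
\bea
S_{{\rm even};1} & \ = \ & -\frac{2}{\log X}\intii g(\tau) \sum_{\ell=1}^\infty \sum_p \frac{\log p}{p^{\ell(1 - 4\pi i\tau/\log X)}}\, d\tau \nonumber\\
& \ = \ & -\frac{2}{\log X}\intii g(\tau) \left(-\frac{\zeta'}{\zeta}\right)\!\left(1 - \frac{4\pi i \tau}{\log X}\right) d\tau,
\eea
using the standard Dirichlet series identity $-\zeta'(s)/\zeta(s) = \sum_{\ell \ge 1}\sum_p (\log p)\, p^{-\ell s} = \sum_n \Lambda(n) n^{-s}$, valid for $\Re(s) > 1$. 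Since $\Re(1 - 4\pi i \tau/\log X) = 1$, I must be careful: the Dirichlet series does not converge on the line $\Re(s) = 1$, so this step should be done by first restricting to $|\tau| \le \log^{1/4} X$ (with negligible error, as in Lemma \ref{lem:secondarytermratioslastfactor}) and working slightly to the right, or by keeping the finite truncation $p^\ell \le$ something implicit in $\supp(\hg)$ being compact — in fact since $\hg$ has compact support in $(-\sigma,\sigma)$ with $\sigma < 1$, only finitely many prime powers $p^\ell$ with $2\log p^\ell/\log X < \sigma$, i.e. $p^\ell < X^{\sigma/2}$, contribute, so $S_{{\rm even};1}$ is literally a finite sum and all interchanges are trivially legal.

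Next I would account for the pole of $\zeta(s)$ at $s=1$, which is the source of the $-g(0)/2$ term. Writing $-\zeta'(s)/\zeta(s) = \frac{1}{s-1} + \left(-\frac{\zeta'(s)}{\zeta(s)} - \frac{1}{s-1}\right)$, the bracketed piece is holomorphic near $s=1$, hence at $s = 1 - 4\pi i \tau/\log X$ it converges to $\frac{\zeta'}{\zeta}(1 + \cdots)$ appropriately — actually I should double-check the sign: we want the final answer to feature $+\frac{\zeta'}{\zeta}(1 + 4\pi i\tau/\log X)$, and since $g$ is even I can replace $\tau$ by $-\tau$ freely, turning $-\frac{\zeta'}{\zeta}(1 - 4\pi i\tau/\log X)$ into $-\frac{\zeta'}{\zeta}(1 + 4\pi i\tau/\log X)$; then the sign discrepancy with the claimed $+\frac{2}{\log X}\intii g(\tau)\frac{\zeta'}{\zeta}(1+\cdots)$ must be resolved by the pole term, so I need to track signs in the residue computation with the same care as in the proof of Lemma \ref{lem:contrRgX}. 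The $\frac{1}{s-1}$ piece contributes $-\frac{2}{\log X}\intii g(\tau)\cdot\frac{1}{-4\pi i\tau/\log X}\,d\tau = \frac{2}{\log X}\cdot\frac{\log X}{4\pi i}\intii \frac{g(\tau)}{\tau}\,d\tau$, and since $g$ is even this principal-value integral equals $\intii g(\tau)\frac{\sin(2\pi\tau)}{2\pi\tau}\cdot(\text{something})$ — more carefully, pairing $\tau$ with $-\tau$ in $e^{4\pi i\tau\log p^\ell/\log X}$ before inversion is cleaner, exactly as in the Remark following Lemma \ref{lem:contrRgX}: one gets $\intii g(\tau)\frac{\sin(\cdots)}{\cdots}d\tau = \frac{g(0)}{2}$ using $\supp(\hg)\subset(-1,1)$, and the overall sign produces $-g(0)/2$.

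The main obstacle I anticipate is purely bookkeeping: getting all the signs and factors of $2$, $2\pi i$, and $\log X$ consistent between the Fourier-inversion normalization, the residue from the pole, and the evenness substitution $\tau \mapsto -\tau$. There is no deep analytic difficulty here — because $\hg$ has compact support strictly inside $(-1,1)$, the prime sum is finite, the pole of $\zeta$ contributes cleanly via the elementary integral $\intii g(\tau)\frac{\sin 2\pi\tau}{2\pi\tau}d\tau = g(0)/2$ (itself a consequence of $\widehat{\chiint}(\tau) = \frac{\sin\pi\tau}{\pi\tau}$ and Plancherel, valid precisely when $\supp(\hg)\subset(-1,1)$), and the remaining holomorphic part of $-\zeta'/\zeta$ gives the stated $\zeta'/\zeta$ integral with no error term at all — matching the fact that the lemma is stated as an exact identity rather than an asymptotic. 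I would organize the write-up so that the finiteness of the sum is invoked first, then Fourier inversion, then the split of $-\zeta'/\zeta$ into polar and regular parts, then the two resulting integrals evaluated separately.
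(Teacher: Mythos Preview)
Your overall plan---Fourier inversion to recognize $\sum_n \Lambda(n)n^{-s}=-\zeta'(s)/\zeta(s)$, then split off the pole at $s=1$---is exactly the paper's strategy. The paper carries it out by first placing the contour at $\Re(z)=1+\gep$ (where the Dirichlet series converges absolutely), identifying the resulting integral with $-S_{{\rm even};1}$ via Fourier inversion, and then shifting to $\Re(z)=1$ along a three-piece path with a small semicircle $C_\delta$ around $z=1$; the half-residue from $C_\delta$ supplies $g(0)/2$, and the remaining principal-value integral on $\Re(z)=1$ gives the $\zeta'/\zeta$ term. No hypothesis $\sigma<1$ is used.

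The gap in your argument is the extraction of $-g(0)/2$. Working directly on $\Re(s)=1$ and splitting off $1/(s-1)$, you arrive at $\frac{1}{2\pi i}\,{\rm PV}\!\int g(\tau)\,\tau^{-1}\,d\tau$, which is \emph{zero} since $g$ is even---it does not produce $g(0)/2$. Your attempted rescue via the identity $\int g(\tau)\frac{\sin 2\pi\tau}{2\pi\tau}\,d\tau=g(0)/2$ (from the Remark after Lemma~\ref{lem:contrRgX}) is not applicable: that $\sin 2\pi\tau$ arose there from an extra factor $e^{-2\pi i\tau}$ coming from the $d$-sum in $R(g;X)$, and there is no such factor in $S_{{\rm even};1}$. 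The $-g(0)/2$ is genuinely a one-sided-limit phenomenon: if you keep $\Re(s)=1+\gep$ and let $\gep\to0^+$, then $\int g(\tau)\,(s-1)^{-1}\,d\tau=\int g(\tau)\,(\gep+4\pi i\tau/\log X)^{-1}\,d\tau$ converges (via the Poisson kernel / Sokhotski--Plemelj) to $\frac{\log X}{4}g(0)$, not to the principal value $0$; after the prefactor $-2/\log X$ this yields $-g(0)/2$. Equivalently, you can shift the contour and pick up the half-residue as the paper does. Either way, the result is an exact identity valid for any compactly supported $\hg$, and your reliance on $\supp(\hg)\subset(-1,1)$ is a signal that the mechanism you are invoking is the wrong one.
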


\begin{lem}\label{lem:ntsumkeven2} Notation as above, \bea
S_{{\rm even};2} & \ = \ & \frac{2}{\log X}\intii
g(\tau) A_D'\left(\frac{2\pi i \tau}{\log X};\frac{2\pi i \tau}{\log
X}\right) + O(X^{-\foh+\gep}). \eea
\end{lem}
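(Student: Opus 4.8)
The plan is to interchange the order of summation in $S_{{\rm even};2}$, replace the number of even fundamental discriminants $d\le X$ divisible by $p$ by its main term plus error (via Lemma~\ref{lem:numdwithpdivided}), identify the resulting main term with the $A_D'$-integral by expanding the Euler product of $A_D'$ as a geometric series and applying Fourier inversion, and dispatch the error with a Chebyshev/Mertens bound.

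Since $\supp(\hg)\subset(-\sigma,\sigma)$ with $\sigma<1$, the factor $\hg(2\log p^\ell/\log X)$ vanishes unless $p^\ell<X^{\sigma/2}$, so every sum below is finite. As the $\ell$-sum in the definition of $S_{{\rm even};2}$ does not involve $d$, interchanging sums gives
\begin{equation}
S_{{\rm even};2} \ = \ \frac{2}{X^\ast\log X}\sum_p\ \#\{d\le X:\ p\mid d\}\ \sum_{\ell=1}^\infty\frac{\log p}{p^\ell}\,\hg\!\left(\frac{2\log p^\ell}{\log X}\right),
\end{equation}
with $d$ ranging over even fundamental discriminants. By Lemma~\ref{lem:numdwithpdivided}, $\#\{d\le X:\ p\mid d\}=X^\ast/(p+1)+O(X^{1/2+\gep})$ uniformly for $p<X^{1/2}$ (in particular for every $p$ occurring here). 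Writing $S_{{\rm even};2}=M+\mathcal E$ accordingly,
\begin{equation}
M \ = \ \frac{2}{\log X}\sum_p\sum_{\ell=1}^\infty\frac{\log p}{(p+1)p^\ell}\,\hg\!\left(\frac{2\log p^\ell}{\log X}\right),\qquad \mathcal E \ \ll\ \frac{X^{1/2+\gep}}{X^\ast\log X}\sum_{p^\ell<X^{\sigma/2}}\frac{\log p}{p^\ell}.
\end{equation}
By Mertens, $\sum_{p^\ell<Y}\log p/p^\ell=\sum_{n<Y}\Lambda(n)/n\ll\log Y$, so with $Y=X^{\sigma/2}$ and $X^\ast\gg X$ we obtain $\mathcal E\ll X^{-1/2+\gep}$, which is the claimed error term.

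It remains to show $M$ equals the $A_D'$-term. With $r=2\pi i\tau/\log X$ purely imaginary we have $|p^{-(1+2r)}|=p^{-1}<1$, so the Euler product of $A_D'$ expands as an absolutely convergent double series,
\begin{equation}
A_D'(r;r) \ = \ \sum_p\frac{\log p}{p+1}\cdot\frac{p^{-(1+2r)}}{1-p^{-(1+2r)}} \ = \ \sum_p\sum_{\ell=1}^\infty\frac{\log p}{(p+1)p^\ell}\,p^{-2\ell r}.
\end{equation}
Multiplying by $g(\tau)$ and integrating, the interchange of $\intii$ with $\sum_{p,\ell}$ is legitimate since $g\in L^1$ and $\sum_{p,\ell}\log p/((p+1)p^\ell)<\infty$; because $\hg(\xi)=\intii g(\tau)e^{-2\pi i\tau\xi}\,d\tau$ and $p^{-2\ell r}=e^{-4\pi i\ell\tau\log p/\log X}$, the $(p,\ell)$ term contributes $\hg(2\ell\log p/\log X)=\hg(2\log p^\ell/\log X)$. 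Hence $\frac{2}{\log X}\intii g(\tau)\,A_D'\!\left(\tfrac{2\pi i\tau}{\log X};\tfrac{2\pi i\tau}{\log X}\right)d\tau=M$, and $S_{{\rm even};2}=M+\mathcal E$ yields the lemma.

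The one place requiring genuine care is the uniform count $\#\{d\le X:p\mid d\}=X^\ast/(p+1)+O(X^{1/2+\gep})$: it is a matter of counting square-free integers in arithmetic progressions (write such a $d$ as $p\,d'$ for odd $p$, with $d'$ square-free, coprime to $p$, and in a prescribed residue class mod $4$; the case $p=2$ is handled through $4\mid d$), and the $X^{1/2}$ error must be uniform in $p$ — this is exactly what Lemma~\ref{lem:numdwithpdivided} supplies. Everything else (the sum interchange, the geometric-series expansion of $A_D'$, the Mertens bound for $\mathcal E$) is routine.
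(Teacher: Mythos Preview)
Your proof is correct and follows the same route as the paper's: interchange the $d$- and $p$-sums, apply Lemma~\ref{lem:numdwithpdivided} to replace $\#\{d\le X:p\mid d\}$ by $X^\ast/(p+1)+O(X^{1/2})$, and identify the main term with the $A_D'$-integral via the geometric-series expansion and Fourier inversion. The one organizational difference is that you exploit the hypothesis $\sigma<1$ so that every prime in the sum automatically satisfies $p<X^{\sigma/2}<X^{1/2}$, whereas the paper first truncates to $p\le X^{1/2}$ (at cost $O((\log\log X)/X)$) and at the end re-extends the $p$-sum to all primes; your shortcut is tidier in the present setting, though the paper's version has the advantage of working for test functions of arbitrary support.
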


\subsubsection{Analysis of $S_{{\rm even};1}$}\ \\

\begin{proof}[Proof of Lemma \ref{lem:ntsumkeven1}]
We have \be S_{{\rm even};1} \ = \ \frac{-2}{\log
X}\sum_{n=1}^\infty \frac{\Lambda(n)}{n} \ \hg\left(2\frac{\log
n}{\log X}\right). \ee We use Perron's formula to re-write $S_{{\rm
even};1}$ as a contour integral. For any $\gep > 0$ set \bea I_1 \ =
\ \ci \int_{\Re(z)=1+\gep} g\left(\frac{(2z-2)\log A}{4\pi i}\right)
\sum_{n=1}^\infty \frac{\Lambda(n)}{n^z}\ dz; \eea we will later
take $A = X^{1/2}$. We write $z = 1+\gep+iy$ and use
\eqref{eq:extendingphi} (replacing $\phi$ with $g$) to write
$g(x+iy)$ in terms of the integral of $\hg(u)$. We have \bea I_1 & \
= \ & \sum_{n=1}^\infty \frac{\Lambda(n)}{n^{1+\gep}} \ci \intii
g\left(\frac{y\log A}{2\pi}-\frac{i\gep\log
A}{2\pi}\right)e^{-iy\log n} idy \nonumber\\ &=& \sum_{n=1}^\infty
\frac{\Lambda(n)}{n^{1+\gep}} \frac1{2\pi} \intii \left[\intii
\left[\hg(u)e^{\gep u \log A}\right] e^{-2\pi i \frac{-y\log A}{2\pi
}u} du \right]e^{-iy\log n}dy.\ \ \ \ \eea We let $h_\gep(u) =
\hg(u) e^{\gep u \log A}$. Note that $h_\gep$ is a smooth, compactly
supported function and $\widehat{\widehat{h_\gep}}(w) = h_\gep(-w)$.
Thus \bea I_1 & \ = \ & \sum_{n=1}^\infty
\frac{\Lambda(n)}{n^{1+\gep}}  \frac1{2\pi} \intii
\widehat{h_\gep}\left(-\frac{y\log A}{2\pi}\right) e^{-iy\log n} dy
\nonumber\\ &=& \sum_{n=1}^\infty \frac{\Lambda(n)}{n^{1+\gep}}
\frac1{2\pi}\intii \widehat{h_\gep}(y) e^{-2\pi i \frac{-y \log
n}{\log A}}\ \frac{2\pi dy}{\log A} \nonumber\\ &=&
\sum_{n=1}^\infty \frac{\Lambda(n)}{n^{1+\gep}} \frac1{\log A}\
\widehat{\widehat{h_\gep}}\left(-\frac{\log n}{\log A}\right)
\nonumber\\
&=& \sum_{n=1}^\infty \frac{\Lambda(n)}{n^{1+\gep}}  \frac1{\log A}\
\hg\left(\frac{\log n}{\log A}\right) e^{\gep \log n} \nonumber\\
&=& \frac1{\log A}\sum_{n=1}^\infty \frac{\Lambda(n)}{n}\
\hg\left(\frac{\log n}{\log A}\right). \eea By taking $A = X^{1/2}$
we find \be S_{{\rm even};1} \ = \ \frac{-2}{\log
X}\sum_{n=1}^\infty \frac{\Lambda(n)}{n} \ \hg\left(2\frac{\log
n}{\log X}\right) \ = \ -I_1.\ee

We now re-write $I_1$ by shifting contours; we will not pass any
poles as we shift. For each $\delta > 0$ we consider the contour
made up of three pieces: $(1-i\infty,1-i\delta]$, $C_\delta$, and
$[1-i\delta,1+i\infty)$, where $C_\delta = \{z: z-1 = \delta
e^{i\theta}, \theta \in [-\pi/2,\pi/2]\}$ is the semi-circle going
counter-clockwise from $1-i\delta$ to $1+i\delta$. By Cauchy's
residue theorem, we may shift the contour in $I_1$ from $\Re(z) =
1+\gep$ to the three curves above. Noting that $\sum_n \Lambda(n)
n^{-z} = -\zeta'(z)/\zeta(z)$, we find that \bea I_1 &\ = \ &
\ci\left[\int_{1-i\infty}^{1-i\delta} + \int_{C_\delta} +
\int_{1+i\delta}^{1+i\infty} g\left(\frac{(2z-2)\log A}{4\pi
i}\right) \frac{-\zeta'(z)}{\zeta(z)}\ dz\right].\eea The integral
over $C_\delta$ is easily evaluated. As $\zeta(s)$ has a pole at
$s=1$, it is just half the residue of $g\left(\frac{(2z-2)\log
A}{4\pi i}\right)$ (the minus sign in front of $\zeta'(z)/\zeta(z)$
cancels the minus sign from the pole). Thus the $C_\delta$ piece is
$g(0)/2$. We now take the limit as $\delta \to 0$: \be I_1 \ = \
\frac{g(0)}2 - \lim_{\delta \to 0} \frac1{2\pi}
\left[\int_{-\infty}^{-\delta} + \int_{\delta}^\infty
g\left(\frac{y\log A}{2\pi}\right) \
\frac{\zeta'(1+iy)}{\zeta(1+iy)}\ dy\right].\ee As $g$ is an even
Schwartz function, the limit of the integral above is well-defined
(for large $y$ this follows from the decay of $g$, while for small
$y$ it follows from the fact that $\zeta'(1+iy)/\zeta(1+iy)$ has a
simple pole at $y=0$ and $g$ is even). We again take $A=X^{1/2}$,
and change variables to $\tau = \frac{y\log A}{2\pi} = \frac{y\log
X}{4\pi}$. Thus \be I_1 \ = \ \frac{g(0)}2 - \frac{2}{\log X}\intii
g(\tau) \frac{\zeta'}{\zeta}\left(1+\frac{4\pi i \tau}{\log
X}\right) d\tau, \ee which completes the proof of Lemma
\ref{lem:ntsumkeven1}. \end{proof}

\subsubsection{Analysis of $S_{{\rm even};2}$}\ \\

\begin{proof}[Proof of Lemma \ref{lem:ntsumkeven2}]
Recall \be S_{{\rm even};2} \ = \ \frac2{X^\ast} \sum_{d\le X}
\sum_{\ell=1}^\infty \sum_{p|d} \frac{\log p}{p^\ell \log X}\
\hg\left(2\frac{\log p^\ell}{\log X}\right). \ee We may restrict the
prime sum to $p \le X^{1/2}$ at a cost of $O(\log\log X / X)$. We
sketch the proof of this claim. Since $\hg$ has finite support, $p
\le X^\sigma$ and thus the $p$-sum is finite. Since $d \le X$ and $p
\ge X^{1/2}$, there are at most 2 primes which divide a given $d$.
Thus \bea  \frac2{X^\ast} \sum_{d\le X} \sum_{\ell=1}^\infty \sum_{p
= X^{1/2} \atop p|d}^{X^\sigma} \frac{\log p}{p^\ell \log X}\
\hg\left(2\frac{\log p^\ell}{\log X}\right) & \ \ll \ &
\frac1{X^\ast} \sum_{\ell=1}^\infty \sum_{p = X^{1/2}}^{X^\sigma}
\frac{1}{p^\ell} \sum_{d\le X \atop p|d} 1 \nonumber\\ & \ \ll\ &
\frac1{X^\ast} \sum_{p > X^{1/2}}^{X^\sigma} \frac{2}{p} \ \ll \
\frac{\log \log X}{X}.\ \ \ \ \ \eea

In Lemma \ref{lem:numdwithpdivided} we show that \be X^\ast \ = \
\frac{3}{\pi^2}X + O(X^{1/2}) \ee and that for $p \le X^{1/2}$ we
have \be \sum_{d \le X \atop p|d} 1 \ = \ \frac{X^\ast}{p+1} +
O(X^{1/2}). \ee Using these facts we may complete the analysis of
$S_{{\rm even};2}$: \bea S_{{\rm even};2} &\ = \ & \frac2{X^\ast}
\sum_{d\le X} \sum_{\ell=1}^\infty \sum_{p \le X^{1/2} \atop p|d}
\frac{\log p}{p^\ell \log X}\ \hg\left(2\frac{\log p^\ell}{\log
X}\right) + O\left(\frac{\log\log X}{X}\right)\nonumber\\ & = &
\frac2{X^\ast} \sum_{\ell=1}^\infty \sum_{p \le X^{1/2}} \frac{\log
p}{p^\ell \log X}\ \hg\left(2\frac{\log p^\ell}{\log X}\right)
\sum_{d\le X,\ p|d} 1 + O\left(\frac{\log\log
X}{X}\right)\nonumber\\ &=& 2 \sum_{\ell=1}^\infty \sum_{p \le
X^{1/2}} \frac{\log p}{p^\ell \log X}\cdot \frac{1}{p+1}\
\hg\left(2\frac{\log p^\ell}{\log X}\right) \nonumber\\ & & \ \ \ +\
O\left(\frac{X^{1/2}}{X} \sum_{\ell=1}^\infty \sum_{p \le X^{1/2}}
\frac1{p^\ell} + \frac{\log\log X}{X}\right) \nonumber\\ &=& 2
\sum_{\ell=1}^\infty \sum_{p \le X^{1/2}} \frac{\log p}{p^\ell \log
X}\cdot \frac{1}{p+1}\ \hg\left(2\frac{\log p^\ell}{\log X}\right) +
O(X^{-\foh+\gep}).\eea

We re-write $\hg(2\log p^\ell/\log X)$ by expanding the Fourier
transform. \begin{align} & S_{{\rm even};2}  \ = \  2
\sum_{\ell=1}^\infty \sum_{p \le X^{1/2}} \frac{\log p}{(p+1)p^\ell
\log X} \intii g(\tau) e^{-2\pi i\tau \cdot 2\log p^\ell / \log X}
d\tau +
O(X^{-\foh+\gep})\nonumber\\
=\ & 2 \sum_{p \le X^{1/2}} \frac{\log p}{(p+1)\log X}\intii g(\tau)
\sum_{\ell=1}^\infty p^{-\ell} \cdot p^{-2\pi i\tau \cdot 2\ell/\log
X} d\tau + O(X^{-\foh+\gep}) \nonumber\\ = \ & 2\sum_{p\le X^{1/2}}
\frac{\log p}{(p+1)\log X}\intii g(\tau) p^{-(1+2\cdot \frac{2\pi
i\tau}{\log X})} \left(1 - p^{-(1+2\cdot\frac{2\pi i\tau}{\log
X})}\right)^{-1}d\tau + O(X^{-\foh+\gep}). \nonumber\\ \end{align}
We may extend the $p$-sum to be over all primes at a cost of
$O(X^{-1/2+\gep})$; this is because the summands are $O(\log p /
p^2)$ and $g$ is Schwartz. Recalling the definition of $A_D'(r;r)$
in \eqref{eq:defnADAD'}, we see that the resulting $p$-sum is just
$A_D'(2\pi i \tau/\log X; 2\pi i \tau/\log X)$; this completes the
proof of Lemma \ref{lem:ntsumkeven2}. \end{proof}


\subsection{Contribution from $k$ odd}\label{sec:oddkcontri}

As $k$ is odd, $\chi_d(p)^k=\chi_d(p)$. Thus we must analyze the sum
\be\label{eq:expansionSodd} S_{{\rm odd}} \ = \
-\frac2{X^\ast}\sum_{d\le X} \sum_{\ell=0}^\infty \sum_p
\frac{\chi_d(p) \log p}{p^{(2\ell+1)/2} \log X}\ \hg\left(\frac{\log
p^{2\ell+1}}{\log X}\right). \ee If ${\rm supp}(\hg) \subset (-1,
1)$, Rubinstein \cite{Rub1} showed (by applying Jutila's bound
\cite{Ju1,Ju2,Ju3} for quadratic character sums) that if our family
is all discriminants then $S_{{\rm odd}} = O(X^{-\gep/2})$. In his
dissertation Gao \cite{Gao} extended these results to show that the
odd terms do not contribute to the main term provided that ${\rm
supp}(\hg) \subset (-2, 2)$. His analysis proceeds by using Poisson
summation to convert long character sums to shorter ones. We shall
analyze $S_{{\rm odd}}$ using both methods: Jutila's bound gives a
self-contained presentation, but a much weaker result; the Poisson
summation approach gives a better bound but requires a careful
book-keeping of many of Gao's lemmas (as well as an improvement of
one of his estimates).

\subsubsection{Analyzing $S_{{\rm odd}}$ with Jutila's
bound}\label{sec:analyzingSoddJutila}

\begin{lem}\label{lem:proofSodddoesnotcontr} Let $\supp(\hg) \subset
(-\sigma, \sigma)$. Then $S_{{\rm odd}} = O(X^{-\frac{1-\sigma}2}
\log^6 X)$.
\end{lem}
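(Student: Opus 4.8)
The plan is to estimate $S_{\rm odd}$ directly by interchanging the order of summation, pulling the sum over fundamental discriminants $d \le X$ inside, and bounding the resulting character sums $\sum_{d \le X} \chi_d(p)$ uniformly in $p$ via Jutila's large-sieve-type bound for quadratic character sums. First I would expand the Fourier transform, writing $\hg\left(\frac{\log p^{2\ell+1}}{\log X}\right)$ in terms of $g$, or alternatively just use that $\supp(\hg) \subset (-\sigma,\sigma)$ forces $p^{2\ell+1} \le X^\sigma$, so the $p$-sum and $\ell$-sum are finite: only $\ell = 0$ matters up to a negligible contribution from $\ell \ge 1$ (the $\ell \ge 1$ terms have $p^{(2\ell+1)/2} \ge p^{3/2}$ and are easily absorbed, since they contribute $O(\log X \cdot X^{-1/2})$ or better after summing over the at most $X^\sigma$ relevant primes). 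So the heart of the matter is
\be
S_{\rm odd} \ \approx \ -\frac{2}{X^\ast \log X} \sum_{p \le X^\sigma} \frac{\log p}{\sqrt p}\, \hg\left(\frac{\log p}{\log X}\right) \sum_{d \le X} \chi_d(p).
\ee

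Next I would invoke Jutila's bound (as used by Rubinstein \cite{Rub1}): for the sum of $\chi_d(p)$ over fundamental discriminants $d \le X$, one has a bound of the shape $\sum_{d \le X}\chi_d(p) \ll X^{1/2} p^\gep$ (more precisely a bound polynomial in $\log(Xp)$ times $X^{1/2}$, which is where the $\log^6 X$ will come from after accounting for the number of terms and the logarithmic losses in Jutila's estimate). Substituting this in and using $1/X^\ast \ll 1/X$, the contribution of each prime is $\ll \frac{1}{X\log X}\cdot \frac{\log p}{\sqrt p}\cdot X^{1/2} p^\gep$, and summing the trivial count of $O(X^\sigma/\log X)$ relevant primes — or better, summing $\sum_{p \le X^\sigma} \frac{\log p}{\sqrt p}\, p^\gep \ll X^{\sigma/2+\gep}$ — gives a total of $\ll X^{-1/2 + \sigma/2 + \gep}\log^5 X \cdot \frac{1}{\log X}$, which is $O(X^{-(1-\sigma)/2}\log^6 X)$ once the precise power of $\log X$ in Jutila's bound is tracked. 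The even Schwartz decay of $g$ is not really needed here since the sums are already finite; it only matters that $g$ (equivalently $\hg$) is bounded.

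The main obstacle is purely bookkeeping: getting the power of $\log X$ right. Jutila's bound for $\sum_{d} \chi_d(n)$ is not simply $X^{1/2}$; it carries logarithmic factors (and care is needed regarding whether one sums over all $d$, over square-free $d$, or over genuine fundamental discriminants, and whether $p$ is allowed to vary up to $X^\sigma$ with the implied constant uniform). One must also be careful that $\chi_d(p)$ for $p \mid d$ contributes $0$, so there is no extra main term hiding in the "diagonal" $p \mid d$ — unlike the even case, here the diagonal simply vanishes, which is why $S_{\rm odd}$ is a genuine error term. The remaining steps — truncating the $\ell \ge 1$ tail, extending or truncating the $p$-range, extending the $\tau$-integral if one uses the Fourier-expansion route — are all routine given the Schwartz hypotheses and the finiteness of the sums. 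I would therefore organize the proof as: (i) reduce to $\ell = 0$; (ii) interchange summation; (iii) apply Jutila's bound; (iv) sum over $p \le X^\sigma$ and collect powers of $X$ and $\log X$.
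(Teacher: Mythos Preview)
Your approach has a genuine gap: the claimed pointwise bound $\sum_{d\le X}\chi_d(p)\ll X^{1/2}p^{\gep}$ (or $X^{1/2}$ times powers of $\log(Xp)$) is not what Jutila proves, and is not known unconditionally. Jutila's bound, as used by Rubinstein and as cited in the paper, is a mean-square estimate
\[
\sum_{\substack{1<n\le N\\ n\neq\Box}}\ \Bigl|\sum_{\substack{0<d\le X\\ d\ \text{fund.\ disc.}}}\chi_d(n)\Bigr|^2\ \ll\ NX\log^{10}N,
\]
which only says that $\bigl|\sum_d\chi_d(n)\bigr|$ is about $X^{1/2}$ \emph{on average} over $n\le N$; individual terms may be larger. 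The elementary pointwise bound one can actually write down (quadratic reciprocity, M\"obius to detect square-freeness, then P\'olya--Vinogradov on the inner character sum) is $\sum_{d\le X}\chi_d(p)\ll (Xp)^{1/2}\log p$, and plugging that into your scheme yields $S_{\rm odd}\ll X^{-(1-2\sigma)/2+\gep}$, strictly worse than the target $X^{-(1-\sigma)/2}$.

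The paper does not bound $\sum_d\chi_d(p)$ pointwise. Instead it applies Cauchy--Schwarz in the $(p,\ell)$-sum:
\[
|S_{\rm odd}|\ \le\ \frac{2}{X^\ast}\Bigl(\sum_{\ell\ge 0}\sum_{p^{2\ell+1}\le X^\sigma}\Bigl|\tfrac{\log p}{p^{(2\ell+1)/2}\log X}\,\hg\bigl(\tfrac{\log p^{2\ell+1}}{\log X}\bigr)\Bigr|^2\Bigr)^{1/2}\Bigl(\sum_{\ell\ge 0}\sum_{p^{2\ell+1}\le X^\sigma}\Bigl|\sum_{d\le X}\chi_d(p)\Bigr|^2\Bigr)^{1/2}.
\]
The second factor is now precisely a sum of the type Jutila's $L^2$ bound controls (take $N=X^\sigma$), giving $\ll X^{(1+\sigma)/2}\log^5 X$; the first factor is $\ll(\sum_{n\le X^\sigma}1/n)^{1/2}\ll\log^{1/2}X$. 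Dividing by $X^\ast\gg X$ gives the stated $X^{-(1-\sigma)/2}\log^6 X$. The missing idea in your proposal is this Cauchy--Schwarz step, which is what converts an $L^2$ average into the $\ell^1$ bound you need; no separate reduction to $\ell=0$ is required.
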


\begin{proof} Jutila's bound (see (3.4) of \cite{Ju3}) is \be \sum_{1 < n \le N
\atop n\ {\rm non-square}} \ \left| \sum_{0 < d \le X \atop d\ {\rm
fund.\ disc.}}\ \chi_d(n)\right|^2 \ \ll \ N X \log^{10} N \ee (note
the $d$-sum is over even fundamental discriminants at most $X$). As
$2\ell+1$ is odd, $p^{2\ell+1}$ is never a square. Thus Jutila's
bound gives \be \left(\sum_{\ell=0}^\infty \sum_{p^{(2\ell+1)/2} \le
X^\sigma} \left| \sum_{d \le X} \chi_d(p)\right|^2 \right)^{1/2} \
\ll \ X^{\frac{1+\sigma}2} \log^5 X. \ee Recall \bea S_{{\rm odd}}
&\ =\ & -\frac2{X^\ast} \sum_{\ell=0}^\infty \sum_p \frac{\log
p}{p^{(2\ell+1)/2} \log X}\ \hg\left(\frac{\log p^{2\ell+1}}{\log
X}\right) \sum_{d\le X} \chi_d(p). \eea We apply Cauchy-Schwartz,
and find \bea |S_{{\rm odd}}| &\ \le \ &
\frac2{X^\ast}\left(\sum_{\ell=0}^\infty \sum_{p^{2\ell+1} \le
X^\sigma} \left|\frac{\log p}{p^{(2\ell+1)/2} \log X}\
\hg\left(\frac{\log p^{2\ell+1}}{\log
X}\right)\right|^2\right)^{1/2} \nonumber\\ & & \ \ \cdot \
\left(\sum_{\ell=0}^\infty \sum_{p^{2\ell+1} \le X^\sigma}
\left|\sum_{d\le X} \chi_d(p)\right|^2\right)^{1/2} \nonumber\\ &\ll
& \frac2{X^\ast}\left(\sum_{n \le X^\sigma} \frac1{n}\right)^{1/2}
\cdot X^{\frac{1+\sigma}2} \log^5 X \nonumber\\ &\ll &
X^{-\frac{1-\sigma}2}  \log^6 X; \eea thus there is a power savings
if $\sigma < 1$.
\end{proof}

\subsubsection{Analyzing $S_{{\rm odd}}$ with Poisson
Summation}\label{sec:analyzingSoddPoissonSum}\ \\

Gao analyzes the contribution from $S_{{\rm odd}}$ by applying
Poisson summation to the character sums. The computations are
simplified if the character $\chi_2(n) = \jsgeneral{2}{n}$ is not
present. He therefore studies the family of odd, positive
square-free $d$ (where $d$ is a fundamental discriminant). His
family is \be\label{eq:gaofamily8d} \{8d:\ X < d \le 2X, \ d \ {\rm
an\ odd\ square-free\ fundamental\ discriminant}\}; \ee we discuss
in Lemma \ref{lem:proofSodddoesnotcontrgao0toX} how to easily modify
the arguments to handle the related family with $0 < d \le X$. The
calculation of the terms from the Ratios Conjecture proceeds
similarly (the only modification is to $X^\ast$, which also leads to
a trivial modification of Lemma \ref{lem:partialsumdexpdpiX} which
does not change any terms larger than $O(X^{-1/2+\gep})$ if
$\supp(\hg) \subset (-1/3, 1/3)$), as does the contribution from
$\chi(p)^k$ with $k$ even. We are left with bounding the
contribution from $S_{{\rm odd}}$. The following lemma shows that we
can improve on the estimate obtained by applying Jutila's bound.

\begin{lem}\label{lem:proofSodddoesnotcontrgao} Let $\supp(\hg) \subset
(-\sigma, \sigma) \subset (-1,1)$. Then for the family given in
\eqref{eq:gaofamily8d}, $S_{{\rm odd}} =
O(X^{-\foh+\gep}+X^{-(1-\frac32\sigma)+\gep})$. In particular, if
$\sigma < 1/3$ then $S_{{\rm odd}} = O(X^{-1/2+\gep})$.
\end{lem}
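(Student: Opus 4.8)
The plan is to bound $S_{{\rm odd}}$ for the family $\{8d:\ X < d \le 2X,\ d\ \text{odd square-free fundamental discriminant}\}$ by following Gao's treatment: open up the character sum $\sum_{d} \chi_{8d}(n)$, apply Poisson summation to convert it to a short dual sum, and then insert the result back into $S_{{\rm odd}}$ and estimate term by term. First I would recall that for $n = p^{2\ell+1}$ odd and $d$ running over odd square-free integers one detects square-freeness by a M\"obius sum over $a^2 \mid d$; this introduces an inner sum over $a$ that must be truncated (say at $a \le X^{1/2}$ plus a small tail) and a main sum over $d$ in an arithmetic progression. Applying Poisson summation (in the form Gao uses, with the associated Gauss-type sum $G_n(k)$) to $\sum_{d \equiv \text{(progression)}} \chi_{8d}(n)$ of length roughly $X/a^2$ produces a dual sum of length roughly $n a^2 / X$, weighted by $\widehat{\Phi}$ of a smooth cutoff and by $G_n(k)$, whose size is governed by the standard evaluation (it essentially vanishes unless $n$ is a square times a divisor of $k$, contributing $\sqrt{n}$ in the surviving cases). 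The $k=0$ term is the main term on the number-theory side and, because $n = p^{2\ell+1}$ is never a perfect square, it does not contribute (this is precisely why Gao uses this square-free family); what remains is the $k\neq 0$ contribution.

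Next I would carry out the bookkeeping on the $k \neq 0$ dual sum. Since $\supp(\hg)\subset(-\sigma,\sigma)$ forces $n \le X^{\sigma}$, the dual variable $k$ runs up to roughly $n a^2 / X \le X^{\sigma} a^2 / X$, which is nonempty only when $a$ is not too small, and the number of surviving $(n,k,a)$ triples together with the $\sqrt{n}$ from the Gauss sum and the $1/\sqrt{n}\cdot \log p/\log X$ weight from $S_{{\rm odd}}$ has to be summed. Here one uses the quadratic reciprocity/large-sieve-free structure of $G_n(k)$ (only $n = $ square $\times$ squarefree kernel dividing $k$ survives), so effectively one is summing over $k$, over the squarefree kernel $m \mid k$, over the square part, and over $a$. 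Bounding this carefully, using that $\widehat{\Phi}$ decays and that $a \le X^{1/2+\gep}$, one expects each of the two natural ranges to produce a bound: the truncation error on the $a$-sum (the square-free sieve tail) contributes $O(X^{-1/2+\gep})$, while the genuine $k\neq 0$ dual sum contributes $O(X^{-(1-\frac32\sigma)+\gep})$ — the exponent $1-\frac32\sigma$ coming from balancing the length $X/a^2$ of the original $d$-sum against the dual length and the $\sqrt n \le X^{\sigma/2}$ factor. Summing the geometric-in-$\ell$ series over $\ell$ is harmless since the $p^{-(2\ell+1)/2}$ weight is absolutely convergent, and the $\ell \ge 1$ terms are strictly smaller than the $\ell = 0$ term.

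The final step is to assemble the two error contributions into $S_{{\rm odd}} = O(X^{-1/2+\gep} + X^{-(1-\frac32\sigma)+\gep})$ and observe that when $\sigma < 1/3$ we have $1 - \tfrac32\sigma > \tfrac12$, so the second term is absorbed into the first and $S_{{\rm odd}} = O(X^{-1/2+\gep})$, as claimed. I would also note in passing (matching the remark preceding Lemma \ref{lem:proofSodddoesnotcontrgao}) that passing from the dyadic family $X < d \le 2X$ to $0 < d \le X$ requires only summing a dyadic decomposition and adjusting $X^\ast$, which changes nothing at the level of these error terms, deferring the precise statement to Lemma \ref{lem:proofSodddoesnotcontrgao0toX}.

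The main obstacle I anticipate is the careful evaluation and uniform estimation of the Gauss-type sum $G_n(k)$ and the resulting multiple sum over $(n,k,a)$: one must correctly track when $G_n(k)$ is nonzero, extract the $\sqrt{n}$ savings, and then show that the combinatorial count of surviving terms times these weights is genuinely of size $X^{-(1-\frac32\sigma)+\gep}$ rather than something larger. This is exactly the point where Gao's original estimate needs to be sharpened (as the text signals), so the delicate part is getting the exponent $1-\tfrac32\sigma$ rather than the weaker $1-2\sigma$-type bound that a cruder treatment of the $a$-sum would give. Everything else — Poisson summation, M\"obius detection of square-freeness, the smooth cutoff and its Fourier decay, interchanging the finite $\ell$- and $p$-sums with the integral defining $\hg$ — is routine once the dual-sum estimate is in hand.
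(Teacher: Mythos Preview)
Your overall strategy---truncated M\"obius for square-freeness, Poisson summation, zero-frequency term vanishing because $p^{2\ell+1}$ is never a square---matches the paper's. But the crux is missing. The paper introduces a smooth cutoff $\Phi(d/X)$ with $\Phi^{(j)}\ll U^j$; the smoothing error $O(X\log^7 X/U)$ forces $U=\sqrt X$, and the dual (non-zero $m$) terms are then controlled via $\widetilde\Phi'(\xi)\ll U^{j-1}|\xi|^{-j}$. Gao's original bound on the \emph{non-square} $m$ piece is $U^2 Z\sqrt Y\log^7 X/X$, too large by a power of $U$. The paper's improvement (its Appendix~C) splits that $m$-sum at a large height, uses $j=2$ rather than $j=3$ below the cut, and obtains instead $UZ\sqrt{Y}\log^7 X/X + UZY^{3/2}\log^4 X/X$. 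With $U=Z=\sqrt X$ and $Y=X^\sigma$, the second term divided by $X^\ast$ is exactly $X^{-(1-\frac32\sigma)+\gep}$. So the exponent $\tfrac32\sigma$ does not come from ``$\sqrt n\le X^{\sigma/2}$'' as you write; it comes from the $Y^{3/2}$ in this sharpened non-square estimate, and that is precisely the step requiring work beyond Gao.

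Two further points. You do not separate square from non-square dual frequencies $m$; the paper treats these quite differently (the square-$m$ terms use $G_m(p)=\sqrt p$ and give $\sqrt{Y/X}\log X\log Z$, negligible after division by $X^\ast$). And your description of the Gauss sum is garbled: here $G_m(p)$ has the prime $p$ as modulus and $m$ as the dual variable, with $G_m(p)=\jsgeneral{m}{p}\sqrt p$ for $p\nmid m$ and $0$ for $p\mid m$; it does not ``essentially vanish unless $n$ is a square times a divisor of $k$.'' Without the parameter $U$, the square/non-square split, and the specific Appendix-C saving of a power of $U$, your sketch will not produce the claimed exponent.
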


\begin{proof} Gao is only concerned with main terms for the
$n$-level density (for any $n$) for all sums. As we only care about
$S_{{\rm odd}}$ for the $1$-level density, many of his terms are not
present. We highlight the arguments. We concentrate on the $\ell =
0$ term in \eqref{eq:expansionSodd} (the other $\ell \ll \log X$
terms are handled similarly, and the finite support of $\hg$ implies
that $S_{{\rm odd}}(\ell) = 0$ for $\ell \gg \log X$): \bea S_{{\rm
odd}} \ = \ -\frac2{X^\ast}\sum_{d\le X} \sum_{\ell=0}^\infty \sum_p
\frac{\chi_d(p) \log p}{p^{(2\ell+1)/2} \log X}\ \hg\left(\frac{\log
p^{2\ell+1}}{\log X}\right) \ = \ \sum_{\ell=0}^\infty S_{{\rm
odd}}(\ell). \eea

Let $Y = X^\sigma$, where $\supp(\hg) \subset (-\sigma, \sigma)$.
Our sum $S_{{\rm odd}}(0)$ is $S(X,Y,\hg)$ in Gao's thesis: \be
S(X,Y,\hg) \ = \ \sum_{X < d < 2X \atop (2,d) = 1} \mu(d)^2 \sum_{p
< Y} \frac{\log p}{\sqrt{p}} \chi_{8d}(p) \hg\left(\frac{\log
p}{\log X}\right). \ee

Let $\Phi$ be a smooth function supported on $(1,2)$ such that
$\Phi(t) = 1$ for $t \in (1 + U^{-1},2 - U^{-1})$ and $\Phi^{(j)}(t)
\ll_j U^j$ for all $j \ge 0$. We show that $S(X,Y,\hg)$ is well
approximated by the smoothed sum $S(X,Y,\hg,\Phi)$, where \be
S(X,Y,\hg,\Phi) \ = \ \sum_{(d,2) = 1} \mu(d)^2 \sum_{p < Y}
\frac{\log p}{\sqrt{p}} \chi_{8d}(p) \hg\left(\frac{\log p}{\log
X}\right) \Phi\left(\frac{d}{X}\right). \ee To see this, note the
difference between the two involves summing $d \in (X, X + X/U)$ and
$d \in (2X - X/U, 2X)$. We trivially bound the prime sum for each
fixed $d$ by $\log^7 X$ (see Proposition III.1 of \cite{Gao}). As
there are $O(X/U)$ choices of $d$ and $\Phi(d/X) \ll 1$, we have \be
S(X,Y,\hg) - S(X,Y,\hg,\Phi) \ \ll \ \frac{X\log^7 X}{U}. \ee We
will take $U = \sqrt{X}$. Thus upon dividing by $X^\ast \gg X$ (the
cardinality of the family), this difference is $O(X^{-1/2+\gep})$.
The proof is completed by bounding $S(X,Y,\hg,\Phi)$.

To analyze $S(X,Y,\hg,\Phi)$, we write it as $S_M(X,Y,\hg,\Phi) +
S_R(X,Y,\hg,\Phi)$, with \bea S_M(X,Y,\hg,\Phi) & \ = \ &
\sum_{(d,2)=1} M_Z(d) \sum_{p < Y} \frac{\log p}{\sqrt{p}}
\chi_{8d}(p) \hg\left(\frac{\log p}{\log X}\right)
\Phi\left(\frac{d}{X}\right) \nonumber\\ S_R(X,Y,\hg,\Phi) & = &
\sum_{(d,2)=1} R_Z(d) \sum_{p < Y} \frac{\log p}{\sqrt{p}}
\chi_{8d}(p) \hg\left(\frac{\log p}{\log X}\right)
\Phi\left(\frac{d}{X}\right), \eea where \bea \mu(d)^2 & \ = \ &
M_Z(d) + R_Z(d) \nonumber\\ & & M_Z(d) \ = \ \sum_{\ell^2|d \atop
\ell \le Z} \mu(\ell), \ \ \ \ R_Z(d) \ = \ \sum_{\ell^2|d \atop
\ell > Z} \mu(\ell); \eea here $Z$ is a parameter to be chosen
later, and $S_M(X,Y,\hg,\Phi)$ will be the main term (for a general
$n$-level density sum) and $S_R(X,Y,\hg,\Phi)$ the error term. In
our situation, both will be small.

In Lemma III.2 of \cite{Gao}, Gao proves that $S_R(X,Y,\hg,\Phi) \ll
(X \log^3 X) / Z$. We haven't divided any of our sums by the
cardinality of the family (which is of size $X$). Thus for this term
to yield contributions of size $X^{-1/2 + \gep}$, we need $Z \ge
X^{1/2}$.

We now analyze $S_M(X,Y,\hg,\Phi)$. Applying Poisson summation we
convert long character sums to short ones. We need certain
Gauss-type sums: \be \left(\frac{1+i}2 + \jsgeneral{-1}{k}
\frac{1-i}2\right) G_m(k)  \ = \ \sum_{a \bmod k} \jsgeneral{a}{k}
e^{2\pi i a m / k}. \ee For a Schwartz function $F$ let \be
\widetilde{F}(\xi) \ = \ \frac{1+i}2 \widehat{F}(\xi) +
\frac{1-i}2\widehat{F}(-\xi). \ee Using Lemma 2.6 of \cite{So}, we
have (see page 32 of \cite{Gao}) \bea\label{eq:expansionSMXYhgphi}
S_M(X,Y,\hg,\Phi) & \ = \ & \frac{X}2 \sum_{2<p<Y} \frac{\log
p}{p^{3/2}} \hg\left(\frac{\log p}{\log X}\right) \nonumber\\ & & \
\ \ \ \cdot \sum_{\alpha \le Z \atop (\alpha,2p) = 1}
\frac{\mu(\alpha)}\alpha \sum_{m=0}^\infty (-1)^m G_m(p)
\widetilde{\Phi}\left(\frac{mX}{2\alpha^2 p}\right). \eea We follow
the arguments in Chapter 3 of \cite{Gao}. The $m=0$ term is analyzed
in \S3.3 for the general $n$-level density calculations. It is zero
if $n$ is odd, and we do not need to worry about this error term
(thus we do not see the error terms of size $X \log^{n-1} X$ or
$(X\log^n X)/Z$ which appear in his later estimates). In \S3.4 he
analyzes the contributions from the non-square $m$ in
\eqref{eq:expansionSMXYhgphi}. In his notation, we have $k=1$, $k_2
= 0$, $k_1 = 0$, $\alpha_1 = 1$ and $\alpha_0 = 0$, and these terms'
contribution is $\ll (U^2 Z \sqrt{Y} \log^7 X)/X$ (remember we
haven't divided by the cardinality of the family, which is of order
$X$). This is too large for our purposes (we have seen that we must
take $U = Z = \sqrt{X}$ and $Y = X^\sigma$). We perform a more
careful analysis of these terms in Appendix
\ref{sec:improvingGaomnotsquare}, and bound these terms'
contribution by \be \frac{UZ\sqrt{Y}\log^7 X}{X} +
\frac{UZY^{3/2}\log^4 X}{X} + \frac{Z^3 U^2 Y^{7/2} \log^4
X}{X^{4018-2\gep}}. \ee

Lastly, we must analyze the contribution from $m$ a square in
\eqref{eq:expansionSMXYhgphi}. From Lemma III.3 of \cite{Gao} we
have that $G_m(p) = 0$ if $p|m$. If $p\notdiv m$ and $m$ is a
square, then $G_m(p) = \sqrt{p}$. Arguing as in \cite{Gao}, we are
left with \bea \sum_{p < Y \atop (p,2)=1} \frac{\log p}{p}
\hg\left(\frac{\log p}{\log X}\right) \sum_{\alpha \le Z \atop
(\alpha,2p) = 1} \frac{\mu(\alpha)}{\alpha^2}\left[\sum_{m=1}^\infty
(-1)^m \widetilde{\Phi}\left(\frac{m^2 X}{2\alpha^2 p}\right) -
\sum_{\widetilde{m}=1}^\infty (-1)^{\widetilde{m}}
\widetilde{\Phi}\left(\frac{p^2 \widetilde{m}^2 X}{2\alpha^2
p}\right)\right].\nonumber\\ \eea If we assume $\supp(\hg) \subset
(-1,1)$, then arguing as on page 41 of \cite{Gao} we find the
$m$-sum above is $\ll \alpha \sqrt{p/X}$, which leads to a
contribution $\ll \sqrt{Y/X} \log X \log Z$; the $\widetilde{m}$-sum
is $\ll \alpha/\sqrt{pX}$ and is thus dominated by the contribution
from the $m$-sum.

Collecting all our bounds, we see a careful book-keeping leads to
smaller errors than in \S3.6 of \cite{Gao} (this is because (1) many
of the error terms only arise from $n$-level density sums with $n$
even, where there are main terms and (2) we did a more careful
analysis of some of the errors). We find that
\be\label{eq:SXYhgPhiexpansionfourterms} S(X,Y,\hg,\Phi) \ \ll \
\frac{X\log^3 X}{Z} + \frac{UZ\sqrt{Y}\log^7 X}{X}  +
\frac{UZY^{3/2}\log^4 X}{X} + \frac{\sqrt{Y} \log X \log
Z}{\sqrt{X}}. \ee We divide this by $X^\ast \gg X$ (the cardinality
of the family). By choosing $Z = X^{1/2}$, $Y = X^\sigma$ with
$\sigma < 1$, and $U = \sqrt{X}$ (remember we need such a large $U$
to handle the error from smoothing the $d$-sum, i.e., showing
$|S(X,Y,\hg) - S(X,Y,\hg,\Phi)|/X$ $\ll$ $X^{-1/2+\gep}$), we find
\be S(X,Y,\hg,\Phi)/X \ \ll \ X^{-1/2+\gep} +
X^{-(1-\frac32\sigma)+\gep},\ee which yields \be S_{{\rm odd}} \ \ll
\ X^{-1/2+\gep} + X^{-(1-\frac32\sigma)+\gep}.\ee Note that if
$\sigma < 1/3$ then $S_{{\rm odd}} \ll X^{-1/2+\gep}$.
\end{proof}

\begin{lem}\label{lem:proofSodddoesnotcontrgao0toX} Let $\supp(\hg) \subset
(-\sigma, \sigma) \subset (-1,1)$. Then for the family
\be\label{eq:gaofamily8d0toX} \{8d:\ 0 < d \le X, \ d \ {\rm an\
odd\ square-free\ fundamental\ discriminant}\} \ee we have $S_{{\rm
odd}} = O(X^{-1/2+\gep} + X^{-(1-\frac32\sigma)+\gep})$. In
particular, if $\sigma < 1/3$ then $S_{{\rm odd}} =
O(X^{-1/2+\gep})$.
\end{lem}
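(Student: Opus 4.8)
The plan is to deduce this from Lemma~\ref{lem:proofSodddoesnotcontrgao} by a dyadic decomposition of the range $0 < d \le X$, disposing of the (few) very small discriminants trivially and feeding each remaining dyadic block through the Poisson--summation argument already used there. Write $X_j = X/2^j$ and decompose $(0,X] = \bigsqcup_{j\ge 1}(X_j,2X_j]$ (keeping only the finitely many $j$ with $X_j \ge 1$), so that (compare \eqref{eq:expansionSodd})
\be
S_{{\rm odd}} \ = \ \sum_{j \ge 1} S_{{\rm odd}}^{(j)}, \qquad S_{{\rm odd}}^{(j)} \ = \ -\frac{2}{X^\ast}\sum_{X_j < d \le 2X_j} \mu(d)^2 \sum_{\ell\ge 0}\sum_p \frac{\chi_{8d}(p)\log p}{p^{(2\ell+1)/2}\log X}\,\hg\!\left(\frac{\log p^{2\ell+1}}{\log X}\right),
\ee
where $X^\ast \gg X$ is the cardinality of the family in \eqref{eq:gaofamily8d0toX}. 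Exactly as in the proof of Lemma~\ref{lem:proofSodddoesnotcontrgao}, the finite support of $\hg$ forces $\ell \ll \log X$ and the $\ell = 0$ piece dominates, so it suffices to bound the $\ell = 0$ contribution of each block. The key point is that within $S_{{\rm odd}}^{(j)}$ the prime sum still runs over $p < Y := X^\sigma$ and the test function is still evaluated at $\log p/\log X$: both of these ranges are governed by the \emph{original} $X$, not by the block parameter $X_j$.

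For the small blocks, namely those with $X_j \le X^{1/2}$, I would bound trivially: by Proposition~III.1 of \cite{Gao} the inner prime sum is $\ll \log^7 X$ for each fixed $d$, there are $\ll X_j$ admissible $d$ in such a block, and summing the resulting geometric series over $j$ yields a total contribution $\ll X^{1/2}\log^7 X$; dividing by $X^\ast \gg X$ this is $O(X^{-1/2+\gep})$.

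For the remaining blocks, $X^{1/2} < X_j \le X$, I would run Gao's argument verbatim, replacing the conductor scale $X$ by $X_j$ wherever it enters the smooth cutoff $\Phi(d/X_j)$, the prefactor of $S_M$, the $\widetilde{\Phi}$--arguments produced by Poisson summation, and the attendant Gauss-sum manipulations --- but \emph{not} in $Y$, nor in the argument of $\hg$ --- and choosing the parameters $U = Z = X_j^{1/2}$. Then \eqref{eq:SXYhgPhiexpansionfourterms}, together with the refined estimate of Appendix~\ref{sec:improvingGaomnotsquare}, gives
\be
S_{{\rm odd}}^{(j)} \ \ll \ \frac{1}{X}\Big( X_j^{1/2}\log^7 X \ + \ Y^{1/2}\log^7 X \ + \ Y^{3/2}\log^4 X \Big).
\ee
Summing over the $O(\log X)$ such blocks, the $X_j^{1/2}$ term is dominated by the block with $X_j \asymp X$ and contributes $O(X^{-1/2}\log^7 X)$, while the two $X_j$-independent terms acquire only an additional factor $\log X$; in total $S_{{\rm odd}} = O(X^{-1/2+\gep} + X^{-(1-\frac32\sigma)+\gep})$, which becomes $O(X^{-1/2+\gep})$ once $\sigma < 1/3$.

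The only genuinely delicate part is the bookkeeping: one must check that the substitution $X \mapsto X_j$ is performed exactly in the places where the conductor magnitude enters Gao's estimates, and \emph{not} where the normalizing length $\log X$ or the prime cutoff $Y = X^\sigma$ appear, and that the per-block bounds are summable in $j$ at the cost of at most a factor $\log X$ --- so that the dyadic decomposition introduces no error term larger than those already present in Lemma~\ref{lem:proofSodddoesnotcontrgao}. Apart from this, the analysis is identical to the dyadic case treated there.
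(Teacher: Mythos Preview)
Your proposal is correct and follows essentially the same approach as the paper: a dyadic decomposition $(0,X] = \bigcup_j (X_j,2X_j]$ with $X_j = X/2^j$, running the Poisson-summation bound of Lemma~\ref{lem:proofSodddoesnotcontrgao} on each block with the conductor parameters $U,Z$ rescaled to $X_j^{1/2}$ while leaving $Y=X^\sigma$ and the argument $\log p/\log X$ of $\hg$ untouched. Your explicit disposal of the small blocks $X_j \le X^{1/2}$ via the trivial prime-sum bound is a harmless extra precaution that the paper's sketch does not separate out, but otherwise the bookkeeping and the resulting error $O(X^{-1/2+\gep}+X^{-(1-\frac32\sigma)+\gep})$ coincide.
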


\begin{proof} As the calculation is standard, we merely sketch the
argument. We write \be (0, X] \ = \ \bigcup_{i=1}^{\log_2 X}
\left(\frac{2X}{2^{i+1}},\ \frac{2X}{2^i}\right]. \ee Let $X_i =
X/2^i$. For each $i$, in Lemma \ref{lem:proofSodddoesnotcontrgao} we
replace most of the $X$'s with $X_i$, $U$ with $U/\sqrt{2^i}$, $Z$
with $Z/\sqrt{2^i}$; the $X$'s we don't replace are the cardinality
of the family (which we divide by in the end) and the $\log X$ which
occurs when we evaluate the test function $\hg$ at $\log p/\log X$.
We do not change $Y$, which controls the bounds for the prime sum.
As we do not have any main terms, there is no loss in scaling the
prime sums by $\log X$ instead of $\log X_i$. We do not use much
about the test function $\hg$ in our estimates. All we use is that
the prime sums are restricted to $p < Y$, and therefore we will
still have bounds of $Y$ (to various powers) for our sums.

We now finish the book-keeping. Expressions such as $UZ/X$ in
\eqref{eq:SXYhgPhiexpansionfourterms} are still $O(1)$, and
expressions such as $X/U$ and $X/Z$ are now smaller. When we divide
by the cardinality of the family we still have terms such as
$Y^{3/2}/X$, and thus the support requirements are unchanged (i.e.,
$S_{{\rm odd}} \ll X^{-1/2+\gep} + X^{-(1-\frac32\sigma)+\gep}$).
\end{proof}




\appendix

\setcounter{equation}{0}

\section{The Explicit Formula}\label{sec:explicitformula}

We quickly review some needed facts about Dirichlet characters; see
\cite{Da} for details. Let $\chi_d$ be a primitive quadratic
Dirichlet character of modulus $|d|$. Let $c(d,\chi_d)$ be the Gauss
sum \be c(d,\chi_d) \ = \ \sum_{k=1}^{d-1} \chi_d(k) e^{2\pi i k/d},
\ee which is of modulus $\sqrt{d}$. Let \be L(s,\chi_d) \ = \
\prod_p \left(1 -\chi_d(p)p^{-s}\right)^{-1} \ee be the $L$-function
attached to $\chi_d$; the completed $L$-function is \be
\Lambda(s,\chi_d) \ = \ \pi^{-(s+a)/2} \gam{s+a}{2} d^{-(s+a)/2}
L(s,\chi_d) \ = \ (-1)^a \frac{c(d,\chi_d)}{\sqrt{d}}
\Lambda(1-s,\overline{\chi_d}), \ee where \be \twocase{a \ = \
a(\chi_d) \ = \ }{0}{if $\chi_d(-1) = 1$}{1}{if $\chi_d(-1) =
-1$.}\ee

We write the zeros of $\Lambda(s,\chi_d)$ as $\foh+i\gamma$; if we
assume GRH then $\gamma\in\R$. Let $\phi$ be an even Schwartz
function and $\hphi$ be its Fourier transform ($\hphi(\xi) = \int
\phi(x) e^{-2\pi i x \xi}dx$); we often assume ${\rm supp}(\hphi)
\subset (-\sigma, \sigma)$ for some $\sigma < \infty$. We set \be
H(s) \ = \ \phi\left(\frac{s - \foh}{i}\right). \ee While $H(s)$ is
initially define only when $\Re(s) = 1/2$, because of the compact
support of $\hphi$ we may extend it to all of $\C$:
\bea\label{eq:extendingphi}
\phi(x) &\ = \ & \intii \hphi(\xi) e^{2\pi i x\xi} d\xi \nonumber\\
\phi(x+iy) &=& \intii \hphi(\xi) e^{2\pi i(x+iy)\xi} d\xi
\nonumber\\ H(x+iy) &=& \intii \left[\hphi(\xi)e^{2\pi(x -
\foh)}\right] \cdot e^{2\pi i y \xi} d\xi. \eea Note that $H(x+iy)$
is rapidly decreasing in $y$ (for a fixed $x$ it is the Fourier
transform of a nice function, and thus the claim follows from the
Riemann-Lebesgue lemma). We now derive the Explicit Formula for
quadratic characters; note the functional equation will always be
even. We follow the argument given in \cite{RS}.

\begin{proof}[Proof of the Explicit Formula, Theorem \ref{thm:oneldNT}] We have
 \bea\label{eq:logderivDir} \Lambda(s,\chi
 ) &\ = \ &
\pi^{-(s+a)/2} \gam{s+a}{2} d^{(s+a)/2} L(s,\chi_d) \ = \
\Lambda(1-s,\chi_d) \nonumber\\
\frac{\Lambda'(s,\chi_d)}{\Lambda(s,\chi_d)}  &=& -\frac{\log\pi}2 +
\foh \frac{\Gamma'}{\Gamma}\left(\frac{s+a}{2}\right) + \frac{\log
d}{2} + \frac{L'(s,\chi_d)}{L(s,\chi_d)} \nonumber\\
\frac{L'(s,\chi_d)}{L(s,\chi_d)} & = & -\sum_p \frac{\chi_d(p)\log
p}{1 - \chi_d(p)p^{-s}} \ = \ -\sum_{k=1}^\infty \sum_p
\frac{\chi_d(p)^k\log p}{p^{ks}}.\eea We will \emph{not} approximate
any terms; we are keeping all lower order terms to facilitate
comparison with the $L$-functions Ratios Conjecture. We set \be I \
= \ \ci \int_{\Re(s) = 3/2}
\frac{\Lambda'(s,\chi_d)}{\Lambda(s,\chi_d)} H(s)ds. \ee We shift
the contour to $\Re(s) = -1/2$. We pick up contributions from the
zeros and poles of $\Lambda(s,\chi_d)$. As $\chi_d$ is not the
principal character, there is no pole from $L(s,\chi_d)$. There is
also no need to worry about a zero or pole from the Gamma factor
$\gam{s+a}{2}$ as $L(1,\chi_d) \neq 0$. Thus the only contribution
is from the zeros of $\Lambda(s,\chi_d)$; the residue at a zero
$\foh + i\gamma$ is $\phi(\gamma)$. Therefore \bea I & \ = \ &
\sum_\gamma \phi(\gamma) + \ci \int_{\Re(s) = -1/2}
\frac{\Lambda'(s,\chi_d)}{\Lambda(s,\chi_d)} H(s)ds. \eea As
$\Lambda(1-s,\chi_d) = \Lambda(s,\chi_d)$, $-\Lambda'(1-s,\chi_d) =
\Lambda(s,\chi_d)$ and \bea I & \ = \ & \sum_\gamma \phi(\gamma) -
\ci \int_{\Re(s)=-1/2}
\frac{\Lambda'(1-s,\chi_d)}{\Lambda(1-s,\chi_d)} H(s)ds. \eea We
change variables (replacing $s$ with $1-s$), and then use the
functional equation: \bea I & \ = \ & \sum_\gamma \phi(\gamma) - \ci
\int_{\Re(s) = 3/2} \frac{\Lambda'(s,\chi_d)}{\Lambda(s,\chi_d)}
H(1-s)ds. \eea Recalling the definition of $I$ gives \be \sum_\gamma
\phi(\gamma)\ =\ \ci \int_{\Re(s)=3/2}
\frac{\Lambda'(s,\chi_d)}{\Lambda(s,\chi_d)}\left[H(s) +
H(1-s)\right] ds. \ee We expand
$\Lambda'(s,\chi_d)/\Lambda(s,\chi_d)$ and shift the contours of all
terms except $L'(s,\chi_d)/L(s,\chi_d)$ to $\Re(s) = 1/2$ (this is
permissible as we do not pass through any zeros or poles of the
other terms); note that if $s=\foh+iy$ then $H(s) = H(1-s) =
\phi(y)$ ($\phi$ is even). Expanding the logarithmic derivative of
$\Lambda(s,\chi_d)$ gives
\bea \sum_\gamma \phi(\gamma) & \ = \ & \frac1{2\pi} \intii
\left[\log \frac{d}{\pi} +
\frac{\Gamma'}{\Gamma}\left(\frac14+\frac{a}2+\frac{iy}2\right)\right]
\phi(y)dy\nonumber\\ & &\ \ +\ \frac1{2\pi i} \int_{\Re(s)=3/2}
\frac{L'(s,\chi_d)}{L(s,\chi_d)} \cdot \left[H(s) + H(1-s)\right] ds
\nonumber\\ &=& \frac1{2\pi} \intii \left[\log \frac{d}{\pi} + \foh
\frac{\Gamma'}{\Gamma}\left(\frac14+\frac{a}2+\frac{iy}2\right)+\foh
\frac{\Gamma'}{\Gamma}\left(\frac14+\frac{a}2-\frac{iy}2\right)\right]
\phi(y)dy\nonumber\\ & &\ \ +\ \frac1{2\pi i} \int_{\Re(s)=3/2}
\frac{L'(s,\chi_d)}{L(s,\chi_d)} \cdot \left[H(s) + H(1-s)\right]
ds, \eea where the last line follows from the fact that $\phi$ is
even.

We use \eqref{eq:logderivDir} to expand $L'/L$. In the arguments
below we shift the contour to $\Re{s} = 1/2$; this is permissible
because of the compact support of $\hphi$ (see
\eqref{eq:extendingphi}): \bea & & \frac1{2\pi i} \int_{\Re(s)=3/2}
\frac{L'}{L}\left(s+iy\right) \cdot
\left[H\left(s\right)+H\left(1-s\right)\right]dy \nonumber\\  = \ &
& -\frac1{2\pi i} \sum_{k=1}^\infty \sum_p \chi_d(p)^k \log p
\int_{\Re(s)=3/2}
\left[H\left(s\right)+H\left(1-s\right)\right] e^{-ks\log p} dy \nonumber\\
=\ & & -\frac2{2\pi} \sum_{k=1}^\infty \sum_p \frac{\chi_d(p)^k\log
p}{p^{k/2}} \intii \phi(y) e^{-2\pi i y \cdot \frac{\log
p^k}{2\pi}}dy \nonumber\\ = \ & & -\frac2{2\pi}\sum_{k=1}^\infty
\sum_p \frac{\chi_d(p)^k\log p}{p^{k/2}}\  \hphi\left(\frac{\log
p^k}{2\pi}\right).\eea

We therefore find that \bea \sum_\gamma \phi(\gamma) &\ = \ &
\frac1{2\pi}\intii \left[\log \frac{d}{\pi} + \foh
\frac{\Gamma'}{\Gamma}\left(\frac14+\frac{a}2+\frac{iy}2\right)+\foh
\frac{\Gamma'}{\Gamma}\left(\frac14+\frac{a}2-\frac{iy}2\right)\right]
\phi(y)dy\nonumber\\ & &\ \ -\ \frac2{2\pi} \sum_{k=1}^\infty \sum_p
\frac{\chi_d(p)^k\log p}{p^{k/2}}\  \hphi\left(\frac{\log
p^k}{2\pi}\right).\eea

We replace $\phi(x)$ with $g(x) = \phi\left(x \cdot\frac{\log
X}{2\pi}\right)$. A standard computation gives $\hg(\xi) =
\frac{2\pi}{\log X}\ \hphi\left(\xi \cdot \frac{2\pi}{\log
X}\right)$. Summing over $d \in \mathcal{F}(X)$ completes the proof.
\end{proof}

\setcounter{equation}{0}

\section{Sums over fundamental
discriminants}\label{sec:sumsoverfunddiscs}

\begin{lem}\label{lem:numdwithpdivided} Let $d$ denote an even
fundamental discriminant at most $X$, and set $X^\ast = \sum_{d \le
X} 1$. Then \be X^\ast   \ = \ \frac{3}{\pi^2}X + O(X^{1/2}) \ee and
for $p \le X^{1/2}$ we have \be \sum_{d \le X \atop p|d} 1 \ = \
\frac{X^\ast}{p+1} + O(X^{1/2}). \ee
\end{lem}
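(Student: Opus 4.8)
The plan is to count even fundamental discriminants by translating the defining congruence conditions into a squarefree‑counting problem, which is controlled by the standard estimate $\sum_{n \le Y,\ n\text{ squarefree}} 1 = \tfrac{6}{\pi^2}Y + O(\sqrt Y)$. Recall (from \S5 of \cite{Da}, as quoted above) that a fundamental discriminant $d$ is even precisely when $d > 0$, and such a $d$ is either (i) a positive squarefree integer $\equiv 1 \pmod 4$, or (ii) $d = 4m$ with $m$ squarefree and $m \equiv 2$ or $3 \pmod 4$. First I would count type (i): among positive squarefree $n \le X$, those with $n \equiv 1 \pmod 4$. Writing the squarefree indicator as $\sum_{e^2 \mid n}\mu(e)$ and exchanging the order of summation, one gets $\sum_{e \le \sqrt X}\mu(e)\,\#\{n \le X:\ e^2 \mid n,\ n \equiv 1 \pmod 4\}$, and the inner count is $\tfrac{1}{4}\cdot\tfrac{X}{e^2} + O(1)$ when $(e,2)=1$ (and is handled similarly, with a different residue‑class density, when $e$ is even). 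Summing the main terms against $\sum_e \mu(e)/e^2$‑type series and bounding the tail of the $e$‑sum by $O(\sqrt X)$ gives a contribution of $cX + O(\sqrt X)$ for an explicit constant $c$. Type (ii) is the same computation with $m \le X/4$ restricted to two residue classes mod $4$; combining the two contributions yields $X^\ast = \tfrac{3}{\pi^2}X + O(\sqrt X)$. (Alternatively, one may quote the known Dirichlet‑series identity $\sum_{d\ \text{fund.}}|d|^{-s} = \zeta(s)/\zeta(2s)\cdot(\text{finite Euler factor at }2)$ and extract the even part, then apply a Tauberian/elementary partial‑summation argument; but the direct squarefree count is cleanest and self‑contained.)

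For the second statement, fix a prime $p \le X^{1/2}$ and count even fundamental discriminants $d \le X$ with $p \mid d$. I would split on whether $p$ is odd or $p = 2$. If $p$ is odd: in case (i), $p \mid d$ and $d$ squarefree means $d = p n'$ with $n'$ squarefree, $(n',p)=1$, $n' \le X/p$, and $pn' \equiv 1 \pmod 4$ (a single residue class for $n'$ mod $4$ once $p$ is fixed); in case (ii), $d = 4m$ with $p \mid m$, so $m = p m'$ with $m'$ squarefree, $(m',p)=1$, $m' \le X/(4p)$, $m'$ in a fixed pair of classes mod $4$. Running the same squarefree sieve as before — now with the coprimality‑to‑$p$ condition, which introduces a factor $(1 - p^{-2})^{-1}\cdot(\text{local factor})$, or more simply is handled by restricting the $\mu(e)$ sum to $(e,p)=1$ — the main term becomes $\big(\tfrac{1}{p}\cdot\tfrac{1}{1+1/p}\big)X^\ast \cdot(1 + \text{corrections})$; carefully bookkeeping the Euler factor at $p$ collapses the density to exactly $\tfrac{1}{p+1}$ relative to $X^\ast$. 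The error term: the $e$‑sum is truncated at $\sqrt{X/p}$, so each piece contributes $O(\sqrt{X/p}) = O(\sqrt X)$, and there are boundedly many pieces, giving the claimed $O(X^{1/2})$ uniformly in $p \le X^{1/2}$. The case $p = 2$ is slightly different — $2 \mid d$ forces $d$ to be of type (ii), i.e. $d = 4m$ with $m$ odd squarefree in the classes $2,3 \pmod 4$, so really $d \equiv 8$ or $12 \pmod{16}$ — but the same sieve gives a count $\tfrac{X^\ast}{3} + O(\sqrt X)$, matching $1/(p+1)$ at $p=2$.

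The cleanest way to organize both parts, and the one I would actually write, is to first establish the asymptotic $\#\{d \le X:\ d\ \text{even fund. disc.}\} = \tfrac{3}{\pi^2}X + O(\sqrt X)$ with the $e$‑sum sieve, then observe that imposing $p \mid d$ simply replaces the relevant squarefree‑counting problem by the analogous one with an extra prime factor pulled out and a coprimality condition, so the local density at $p$ changes from its generic value to $\tfrac{1}{p}\big(1 - \tfrac{1}{p+1}\big)^{-1}\cdot\text{(adjustment)} = \tfrac{1}{p+1}$, while every error term stays $O(\sqrt X)$ because $p \le X^{1/2}$ keeps the truncation point $\sqrt{X/p} \le \sqrt X$. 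The main obstacle is purely bookkeeping: getting the constant in the local density at $p$ to come out as exactly $\tfrac{1}{p+1}$ (rather than some nearby quantity) requires tracking the Euler factor at $p$ through both the $p=2$ and odd‑$p$ cases and through both types (i) and (ii) of discriminant, and making sure the coprimality‑to‑$p$ restriction in the $\mu(e)$ sum is accounted for consistently — there is no analytic difficulty, only the risk of an off‑by‑a‑factor error, so I would double‑check the constant by computing the Dirichlet series $\sum_{d\ \text{even fund.},\ p\mid d}|d|^{-s}$ and reading off its behavior near $s = 1$ relative to $\sum_{d\ \text{even fund.}}|d|^{-s}$.
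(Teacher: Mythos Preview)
Your proposal is correct and follows essentially the same approach as the paper: split even fundamental discriminants into the two types ($d\equiv 1\bmod 4$ squarefree versus $d=4m$ with $m$ squarefree and $m\equiv 2,3\bmod 4$), detect squarefreeness via $\sum_{e^2\mid n}\mu(e)$, and for the $p\mid d$ count pull out the factor $p$ and track the extra coprimality condition, treating $p=2$ separately. The only cosmetic difference is that the paper detects the residue class modulo $4$ using the character $\chi_4$ (splitting into a main sum and a character sum that is $O(X^{1/2})$), whereas you count directly in residue classes; these are equivalent, and your one slip (writing ``$m$ odd squarefree in the classes $2,3\pmod 4$'' in the $p=2$ discussion---$m\equiv 2$ is of course even) does not affect the argument.
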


\begin{proof} We first prove the claim for $X^\ast$, and then
indicate how to modify the proof when $p|d$. We could show this by
recognizing certain products as ratios of zeta functions or by using
a Tauberian theorem; instead we shall give a straightforward proof
suggested to us by Tim Browning (see also \cite{OS1}).

We first assume that $d \equiv 1 \bmod 4$, so we are considering
even fundamental discriminants $\{d \le X: d \equiv 1 \bmod 4,
\mu(d)^2=1\}$; it is trivial to modify the arguments below for $d$
such that $d/4 \equiv 2$ or $3$ modulo $4$ and $\mu(d/4)^2=1$. Let
$\chi_4(n)$ be the non-trivial character modulo 4: $\chi_4(2m) = 0$
and \be \twocase{\chi_4(n) \ = \ }{1}{if $n\equiv 1 \bmod 4$}{0}{if
$n \equiv 3 \bmod 4$.} \ee We have \bea\label{eq:startSXanalysis}
S(X) & \ = \ & \sum_{d\le X \atop \mu(d)^2=1,\ d\equiv 1 \bmod 4} 1
\nonumber\\ & \ = \ & \sum_{d \le
X \atop 2 \notdiv d} \mu(d)^2 \cdot \frac{1 + \chi_4(d)}2 \nonumber\\
&=& \foh \sum_{d\le X \atop 2 \notdiv d} \mu(d)^2 + \foh \sum_{d\le
X} \mu(d)^2 \chi_4(d) \ = \ S_1(X) + S_2(X).\eea By M\"obius
inversion \be \twocase{\sum_{m^2|d} \mu(m) \ = \ }{1}{if $d$ is
square-free}{0}{otherwise.} \ee Thus \bea S_1(X) & \ = \ & \foh
\sum_{d \le X \atop 2 \notdiv d} \sum_{m^2|d} \mu(m) \nonumber\\ &=&
\foh \sum_{m \le X^{1/2}\atop 2\notdiv m} \mu(m) \cdot \sum_{d\ \le\
X/m^2 \atop 2\notdiv d} 1 \nonumber\\ &=& \foh \sum_{m\le X^{1/2}
\atop 2\notdiv m} \mu(m)\left(\frac{X}{2m^2} + O(1)\right)
\nonumber\\ &=& \frac{X}{4} \sum_{m=1 \atop 2\notdiv m}^\infty
\frac{\mu(m)}{m^2} + O(X^{1/2}) \nonumber\\ &=& \fof
\frac{6}{\zeta(2)} \cdot \left(1 - \frac1{2^2}\right)^{-1}\cdot X +
O(X^{1/2}) \nonumber\\ &=& \frac{2}{\pi^2}X + O(X^{1/2}) \eea
(because we are missing the factor corresponding to $2$ in
$1/\zeta(2)$ above). Arguing in a similar manner shows $S_2(X) =
O(X^{1/2})$; this is due to the presence of $\chi_4$, giving us \be
S_2(X) \ = \ \foh \sum_{m \le X^{1/2}} \chi_4(m^2)\mu(m) \sum_{d\le
X/m^2} \chi_4(d) \ \ll \ X^{1/2} \ee (because we are summing
$\chi_4$ at consecutive integers, and thus this sum is at most 1). A
similar analysis shows that the number of even fundamental
discriminants $d\le X$ with $d/4 \equiv 2$ or $3$ modulo $4$ is
$X/\pi^2 + O(X^{1/2})$. Thus \be \sum_{d\le X \atop d\ {\rm an\
even\ fund.\ disc.}}1 \ = \ X^\ast \ = \
\frac{3}{\pi^2}X+O(X^{1/2}).\ee

We may trivially modify the above calculations to determine the
number of even fundamental discriminants $d\le X$ with $p|d$ for a
fixed prime $p$. We first assume $p\equiv 1 \bmod 4$. In
\eqref{eq:startSXanalysis} we replace $\mu(d)^2$ with $\mu(pd)^2$,
$d \le X$ with $d \le X/p$, $2\notdiv d$ and $(2p,d)=1$. These imply
that $d \le X$, $p|d$ and $p^2$ does not divide $d$. As $d$ and $p$
are relatively prime, $\mu(pd) = \mu(p)\mu(d)$ and the main term
becomes \bea S_{1;p}(X) & \ = \ &
\foh \sum_{d \le X/p \atop (2p,d)=1} \sum_{m^2|d} \mu(m) \nonumber\\
&=& \foh \sum_{m \le (X/p)^{1/2}\atop (2p,m)=1} \mu(m) \cdot
\sum_{d\ \le\ (X/p)/m^2 \atop (2p,d)=1} 1 \nonumber\\ &=& \foh
\sum_{m\le (X/p)^{1/2} \atop
(2p,m)=1} \mu(m)\left(\frac{X/p}{m^2} \cdot \frac{p-1}{2p} + O(1)\right) \nonumber\\
&=& \frac{(p-1)X}{4p^2} \sum_{m=1 \atop (2p, m)=1}^\infty
\frac{\mu(m)}{m^2} + O(X^{1/2}) \nonumber\\ &=& \fof
\frac{6}{\zeta(2)} \cdot \left(1 - \frac1{2^2}\right)^{-1} \cdot
\left(1-\frac1{p^2}\right)^{-1} \cdot \frac{(p-1)X}{p^2} + O(X^{1/2}) \nonumber\\
&=& \frac{2X}{(p+1)\pi^2} + O(X^{1/2}), \eea and the cardinality of
this piece is reduced by $(p+1)^{-1}$ (note above we used $\#\{n \le
Y: (2p,n)=1\}$ $=$ $\frac{p-1}{2p}Y+O(1)$). A similar analysis holds
for $S_{2;p}(X)$, as well as the even fundamental discriminants $d$
with $d/4 \equiv 2$ or $3$ modulo $4$).

We need to trivially modify the above arguments if $p\equiv 3 \bmod
4$. If for instance we require $d\equiv 1 \bmod 4$ then instead of
replacing $\mu(d)^2$ with $\mu(d)^2 (1 + \chi_4(d))/2$ we replace it
with $\mu(pd)^2 (1-\chi_4(d))/2$, and the rest of the proof proceeds
similarly.

It is a completely different story if $p=2$. Note if $d\equiv 1
\bmod 4$ then 2 \emph{never} divides $d$, while if $d/4 \equiv 2$ or
3 modulo 4 then 2 \emph{always} divides $d$. There are $3X/\pi^2 +
o(X^{1/2})$ even fundamental discriminants at most $X$, and $X/\pi^2
+ O(x^{1/2})$ of these are divisible by 2. Thus, if our family is
all even fundamental discriminants, we do get the factor of
$1/(p+1)$ for $p=2$, as one-third (which is $1/(2+1)$ of the
fundamental discriminants in this family are divisible by $2$.
\end{proof}

In our analysis of the terms from the $L$-functions Ratios
Conjecture, we shall need a partial summation consequence of Lemma
\ref{lem:numdwithpdivided}.

\begin{lem}\label{lem:partialsumdexpdpiX} Let $d$ denote an
even fundamental discriminant at most $X$ and $X^\ast = \sum_{d\le
X} 1$ and let $z = \tau - i w \frac{\log X}{2\pi}$ with $w \ge 1/2$.
Then \be \sum_{d\le X} e^{-2\pi i z\frac{\log(d/\pi)}{\log X}} \ = \
X^\ast e^{-2\pi i \left(1-\frac{\log \pi}{\log X}\right) z} \left(1
- \frac{2\pi i z}{\log X}\right)^{-1} + O(\log X). \ee
\end{lem}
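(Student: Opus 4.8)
The plan is to recognize the sum as a Dirichlet-series partial sum and evaluate it by Abel summation against the counting function of even fundamental discriminants. Writing $s = \frac{2\pi i z}{\log X}$, we have $e^{-2\pi i z\log(d/\pi)/\log X} = (d/\pi)^{-s} = \pi^s d^{-s}$; and since $z = \tau - iw\frac{\log X}{2\pi}$ one computes $s = w + \frac{2\pi i\tau}{\log X}$, so in particular $\Re(s) = w \ge \foh$. Let $N(t)$ denote the number of even fundamental discriminants at most $t$; the argument proving Lemma \ref{lem:numdwithpdivided} in fact gives $N(t) = \frac{3}{\pi^2}t + O(\sqrt t)$ uniformly for $t \ge 1$, with $N(X) = X^\ast$. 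Note also $1 - \frac{2\pi i z}{\log X} = 1 - s$ and $\pi^s X^{-s} = e^{-s\log(X/\pi)} = e^{-2\pi i(1-\frac{\log\pi}{\log X})z}$, so the asserted main term is exactly $X^\ast\, \pi^s X^{-s}(1-s)^{-1}$.

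First I would apply Abel summation with $f(t) = \pi^s t^{-s}$, giving
\be
\sum_{d\le X} e^{-2\pi i z\frac{\log(d/\pi)}{\log X}} \ = \ N(X)\pi^s X^{-s} \ + \ s\pi^s\int_1^X N(t)\,t^{-s-1}\,dt.
\ee
Substituting $N(t) = \frac{3}{\pi^2}t + E(t)$ with $E(t) \ll \sqrt t$ splits this into a main term and two error integrals. The main term is $\frac{3}{\pi^2}\pi^s\big[X^{1-s} + s\int_1^X t^{-s}\,dt\big]$, and using $\int_1^X t^{-s}\,dt = (X^{1-s}-1)/(1-s)$ a one-line algebraic simplification collapses it to $\frac{3}{\pi^2}\pi^s\frac{X^{1-s}-s}{1-s} = \frac{3X}{\pi^2}\,\pi^s X^{-s}(1-s)^{-1} - \frac{3}{\pi^2}\,s\pi^s(1-s)^{-1}$. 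The second summand here is $O(1)$, and replacing $\frac{3X}{\pi^2}$ by $X^\ast$ in the first costs only $O\big(\sqrt X\,|\pi^s X^{-s}|\,|1-s|^{-1}\big) = O(X^{1/2-w}) = O(1)$; thus the main term equals $X^\ast\pi^s X^{-s}(1-s)^{-1}$ up to $O(1)$, which is the claimed expression once the identifications of $\pi^s X^{-s}$ and $1-s$ above are inserted.

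It then remains to bound the two error contributions $\pi^s X^{-s}E(X)$ and $s\pi^s\int_1^X E(t)t^{-s-1}\,dt$. For the first, $|\pi^s X^{-s}| = \pi^w X^{-w} \ll X^{-1/2}$ while $E(X) \ll X^{1/2}$, so it is $O(1)$. For the second---which is the real content of the lemma---we have
\be
\Big|\int_1^X E(t)\,t^{-s-1}\,dt\Big| \ \ll \ \int_1^X t^{1/2}\,t^{-w-1}\,dt \ = \ \int_1^X t^{-1/2-w}\,dt \ \le \ \int_1^X \frac{dt}{t} \ = \ \log X,
\ee
where the last inequality uses precisely the hypothesis $w \ge \foh$. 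This is the step to watch: for $w > \foh$ the integral converges and would give $O(1)$, but it degenerates to $\log X$ exactly at $w = \foh$, which is where the threshold in the statement and the $O(\log X)$ (rather than $O(1)$) error both come from. Collecting the pieces yields the stated identity; the only care needed is the bookkeeping in the main-term simplification and checking that each error genuinely stays $O(\log X)$. (Throughout one treats $|s|$ and $|1-s|^{-1}$ as $O(1)$, which holds in the range in which the lemma is applied---where $|z| \ll \log X$ and $1 - \frac{2\pi i z}{\log X}$ is bounded away from $0$---and otherwise the error is more honestly $O\big((1+|s|)\log X\big)$.)
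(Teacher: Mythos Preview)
Your argument is correct and follows essentially the same route as the paper: both proofs write the summand as $\pi^s d^{-s}$ with $s=\frac{2\pi i z}{\log X}$, apply Abel (partial) summation against the counting function $N(t)=\frac{3}{\pi^2}t+O(\sqrt t)$, and observe that the error integral $\int_1^X t^{-1/2-w}\,dt$ is at most $\log X$ precisely when $w\ge\foh$. Your caveat about treating $|s|$ and $|1-s|^{-1}$ as $O(1)$ is well placed; the paper's proof makes the same tacit assumption (it even expands $\big(1-\frac{2\pi i z}{\log X}\big)^{-1}$ as a geometric series, which requires $|s|<1$), so you are if anything being more careful here.
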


\begin{proof} By Lemma
\ref{lem:numdwithpdivided} we have \be \sum_{d\le u} 1 \ = \
\frac{3u}{\pi^2} + O(u^{1/2}). \ee Therefore by partial summation we
have \bea & & \sum_{d\le X} e^{-2\pi i z \frac{\log(d/\pi)}{\log
X}}\nonumber\\ & \ = \ & e^{2\pi i \frac{\log
\pi}{\log X}} \sum_{d\le X} d^{-2\pi i z / \log X} \nonumber\\
&=& e^{2\pi i z\frac{\log \pi}{\log X}}
\left[\frac{3X+O(X^{1/2})}{\pi^2}\ X^{-\frac{2\pi i z}{\log X}} -
\int^X \left(\frac{3u}{\pi^2} + O(u^{1/2})\right) \cdot
u^{-\frac{2\pi
i z}{\log X}} \frac{-2\pi i z}{\log X} \frac{du}{u}\right]. \nonumber\\
\eea As we are assuming $w \ge 1/2$, the first error term is of size
$O(X^{1/2} X^{-w}) = O(1)$. The second error term (from the
integral) is $O(\log X)$ for such $w$. This is because the
integration begins at $1$ and the integrand is bounded by
$u^{-\foh-w}$. Thus \bea& & \sum_{d\le X} e^{-2\pi i z
\frac{\log(d/\pi)}{\log X}}\nonumber\\ & \ = \ &  e^{2\pi i
z\frac{\log \pi}{\log X}} \left[\frac{3X}{\pi^2} e^{-2\pi i z} +
\frac{\frac{3}{\pi^2}\cdot 2\pi i z}{\log X} \int^X u^{-2\pi
i z/\log X} du\right] +O(\log X) \nonumber\\
&=& e^{2\pi i z\frac{\log \pi}{\log X}} \left[\frac{3X}{\pi^2}
e^{-2\pi i z} + \frac{\frac{3}{\pi^2}\cdot 2\pi i z}{\log X}
\frac{X^{1-2\pi i z/\log X}}{1 - 2\pi i z/\log X}\right] + O(\log X)
\nonumber\\ &=& X^\ast e^{2\pi i z\frac{\log \pi}{\log X}} e^{-2\pi
i z} \left[1 + \frac{2\pi i z}{\log X} \sum_{\nu=0}^\infty
\left(\frac{2\pi i z}{\log
X}\right)^\nu\right] + O(\log X) \nonumber\\
&=& X^\ast e^{-2\pi i \left(1-\frac{\log \pi}{\log X}\right) z}
\left(1 - \frac{2\pi i z}{\log X}\right)^{-1} + O(\log X). \eea
\end{proof}


\section{Improved bound for non-square $m$ terms in
$S_M(X,Y,\hg,\Phi)$}\label{sec:improvingGaomnotsquare}

Gao \cite{Gao} proves that the non-square $m$-terms contribute $\ll
(U^2 Z \sqrt{Y} \log^7 X)/X$ to $S_M(X,Y,\hg,\Phi)$. As this bound
is just a little too large for our applications, we perform a more
careful analysis below. Denoting the sum of interest by $R$, \bea R
& \ = \ & \sum_{\alpha \le Z \atop (\alpha,2) = 1} \sum_{p \le Y
\atop (2\alpha,p) = 1} \frac{\log p}{p} \widehat{g}\left(\frac{\log
p}{\log X}\right) \sum_{m \neq 0, \Box} (-1)^m
\widetilde{\Phi}\left(\frac{mX}{2\alpha^2 p}\right)
\jsgeneral{m}{p}, \eea Gao shows that \be R \ \ll \ \sum_{\alpha \le
Z} \frac{\log^3 X}{\alpha^2} (R_1 + R_2 + R_3), \ee with \bea R_1,
R_2 & \ \ll \  & \frac{U \alpha^2 \sqrt{Y} \log^4 X}{X} \nonumber\\
\ \ R_3 \ \ & \ll & \frac{U^2 \alpha^2 \sqrt{Y} \log^7 X}{X}. \eea
The bounds for $R_1$ and $R_2$ suffice for our purpose, leading to
contributions bounded by $ (UZ\sqrt{Y}\log^4 X)/X$; however, the
$R_3$ bound gives too crude a bound -- we need to save a power of
$U$.

We have (see page 36 of \cite{Gao}, with $k=1$, $k_2=0$, $k_1 = 0$, $\alpha_1 = 1$
and $\alpha_0 = 0$) that \be R_3 \ \ll \ \int_1^Y \frac{X}{\alpha^2
V^{5/2}} \sum_{p < Y} \frac{\log p}{p^2} \sum_{m=1}^\infty (\log^3
m) m \widetilde{\Phi}'\left(\frac{mX}{2\alpha^2 p V}\right) dV. \ee

We have (see (3.10) of \cite{Gao}) that
\be\label{eq:boundsPhitildederiv} \widetilde{\Phi}'(\xi) \ \ll\
U^{j-1} |\xi|^{-j}\ {\rm for\ any\ integer\ j \ge 1}. \ee Letting $M
= X^{2008}$, we break the $m$-sum in $R_3$ into $m \le M$ and $m
> M$. For $m \le M$ we use \eqref{eq:boundsPhitildederiv} with $j=2$
while for $m > M$ we use \eqref{eq:boundsPhitildederiv} with $j=3$.
(Gao uses $j=3$ for all $m$. While we save a bit for small $m$ by
using $j=2$, we cannot use this for all $m$ as the resulting $m$ sum
does not converge.)

Thus the small $m$ contribute \bea & \ll \ & \int_1^Y
\frac{X}{\alpha^2 V^{5/2}} \sum_{p < Y} \frac{\log p}{p^2} \sum_{m
\le M} (\log^3 m) m \frac{U 2^2 \alpha^4 p^2 V^2}{m^2 X^2} dV
\nonumber\\ & \ll \ & \frac{U\alpha^2}{X} \sum_{p < Y} \log p
\sum_{m \le M} \frac{\log^3 m}{m} \int_1^Y \frac{dV}{\sqrt{V}}
\nonumber\\ & \ll \ & \frac{UY^{3/2} \alpha^2 \log^4 X}{X} \eea
(since $M = X^{2008}$ the $m$-sum is $O(\log^4 X)$). The large $m$
contribute \bea & \ll \ & \int_1^Y \frac{X}{\alpha^2 V^{5/2}} \sum_p
\frac{\log p}{p^2} \sum_{m > M} (\log^3 m) m \frac{U^2 2^3 \alpha^6
p^3 V^3}{m^3 X^3} dV \nonumber\\ &\ll \ & \frac{U^2\alpha^4}{X^2}
\sum_{p < Y} p \log
p \sum_{m > M} \frac{\log^3 m}{m^3} \int_1^Y V^{1/2} dV \nonumber\\
&\ll \ & \frac{\alpha^4 U^2 Y^{3/2} Y^2 \log X}{X^2 M^{2-\gep}}.
\eea For our choices of $U$, $Y$ and $Z$, the contribution from the
large $m$ will be negligible (due to the $M^{2-\gep} =
X^{4016-2\gep}$ in the denominator). Thus for these choices \bea R &
\ \ll \ & \sum_{\alpha \le Z} \frac{\log^3 X}{\alpha^2} (R_1 + R_2 +
R_3) \nonumber\\ & \ll & \frac{UZ\sqrt{Y}\log^7 X}{X} +
\frac{UZY^{3/2}\log^4 X}{X} + \frac{Z^3 U^2 Y^{7/2} \log^4
X}{X^{4018-2\gep}}. \eea The last term is far smaller than the first
two. In the first term we save a power of $U$ from Gao's bound, and
in the second we replace $U$ with $Y$. As $Y=X^\sigma$, for $\sigma$
sufficiently small there is a significant savings.


\ \\

\end{document}